\theoremstyle{plain}
\newtheorem{theorem}{Theorem}
\newtheorem{lemma}[theorem]{Lemma}
\newtheorem{corollary}[theorem]{Corollary}
\newtheorem{proposition}[theorem]{Proposition}
\theoremstyle{definition}
\newtheorem{definition}[theorem]{Definition}
\newtheorem{example}[theorem]{Example}
\newtheorem{conjecture}[theorem]{Conjecture}
\theoremstyle{remark}
\newtheorem{remark}[theorem]{Remark}
\begin{document}
\title{Partial-dual genus polynomials and signed intersection graphs}
\author{Qi Yan\\
\small School of Mathematics\\[-0.8ex]
\small China University of Mining and Technology\\[-0.8ex]
\small P. R. China\\
Xian'an Jin\footnote{Corresponding author.}\\
\small School of Mathematical Sciences\\[-0.8ex]
\small Xiamen University\\[-0.8ex]
\small P. R. China\\
\small{\tt Email:qiyan@cumt.edu.cn; xajin@xmu.edu.cn}
}

\begin{abstract}
Recently, Gross, Mansour and Tucker introduced the partial-dual genus polynomial of a ribbon graph as a generating function that enumerates the partial duals of the ribbon graph by genus. It is analogous to the extensively-studied polynomial in topological graph theory that enumerates by genus all embeddings of a given graph. To investigate the partial-dual genus polynomial one only needs to focus on bouquets, i.e. ribbon graphs with only one vertex. In this paper, we shall further show that the partial-dual genus polynomial of a bouquet essentially depends on the signed intersection graph of the bouquet rather than on the bouquet itself. That is to say the bouquets with the same signed intersection graph will have the same partial-dual genus polynomial. We then prove that the partial-dual genus polynomial of a bouquet contains non-zero constant term if and only if its signed intersection graph is positive and bipartite. Finally we consider a conjecture posed by Gross, Mansour and Tucker. that there is no orientable ribbon graph whose partial-dual genus polynomial has only one non-constant term, we give a characterization of non-empty bouquets whose partial-dual genus polynomials have only one term by consider non-orientable case and orientable case separately.
\end{abstract}

\begin{keyword}
Ribbon graph, partial-dual genus polynomial, bouquet, signed intersection graph, bipartite, complete.
\vskip0.2cm

\MSC 05C10\sep 05C30\sep 05C31\sep  57M15
\end{keyword}

\maketitle

\section{Introduction}
\noindent

The concept of partial duality was introduced in \cite{CG} by  Chmutov, and it, together with other partial twualities, has received ever-increasing attention, and their applications span topological graph theory,
knot theory, matroids/delta matroids, and physics. We assume that the readers are familiar with the basic knowledge of topological graph theory, see for example \cite{GT, Mohar} and in particular the ribbon graphs and partial duals, see for example \cite{BR2, CG,DJM, EM, MJ}. Let $G$ be a ribbon graph and $A\subseteq E(G)$. We denote by $G^{A}$ the partial dual of $G$ with respect to $A$.

Similar to the extensively-studied polynomial in topological graph theory that enumerates by genus all embeddings of a given graph. In \cite{GMT}, Gross, Mansour and Tucker introduced the partial-dual orientable genus polynomials for orientable ribbon graphs and the partial-dual Euler genus polynomials for arbitrary ribbon graphs.

\begin{definition}\label{def-1}\cite{GMT}
The \emph{partial-dual Euler genus polynomial} of any ribbon graph $G$ is the generating function
$$^{\partial}\varepsilon_{G}(z)=\sum_{A\subseteq E(G)}z^{\varepsilon(G^{A})}$$
that enumerates all partial duals of $G$ by Euler genus.
The \emph{partial-dual orientable genus polynomial} of an orientable ribbon graph $G$ is the generating function
$$^{\partial}\Gamma_{G}(z)=\sum_{A\subseteq E(G)}z^{\gamma(G^{A})}$$
that enumerates all partial duals of $G$ by orientable genus.
\end{definition}

Clearly, if $G$ is an orientable ribbon graph, then $^{\partial}\Gamma_{G}(z)={^{\partial}}\varepsilon_{G}(z^{\frac{1}{2}})$. Either $^{\partial}\varepsilon_{G}(z)$ or $^{\partial}\Gamma_{G}(z)$ may be referred to as a
partial-dual genus polynomial. A \emph{bouquet} is a ribbon graph with only one vertex. It is known that the partial-dual genus polynomial of any connected ribbon graph is equal to that of a bouquet, which is one of its partial duals. Hence it is natural to focus on bouquets.

In \cite{QYJ}, we introduced the notion of signed interlace sequences of bouquets and proved that two bouquets with the same signed interlace sequence have the same partial-dual Euler genus polynomial if the number of edges of the bouquets is less than 4 and two orientable bouquets with the same signed interlace sequence have the same partial-dual orientable genus polynomial if the number of edges of the bouquets is less than 5. As we observed in \cite{QYJ}, there are bouquets with the same signed interlace sequence but different partial-dual genus polynomials. The first purpose of this paper is to strengthen the notion of signed interlace sequences such that it can determine the partial-dual genus polynomial completely.

Intersection graphs (also called circle graphs) appear and are very useful in both graph theory and combinatorial knot theory \cite{GR}. For example, a characterization of those graphs that can be realized as intersection graphs is given by an elegant theorem of Bouchet \cite{Bo}. Signed interlace sequences of bouquets are exactly degree sequences of their signed intersection graphs. Based on a theorem of Chmutov and Lando \cite{CSL}, we shall prove that any two bouquets with the same signed intersection graph will have the same partial-dual genus polynomial.

Then we focus on signed intersection graphs, the intersection polynomial is introduced, a recursion of this polynomial is given and is used to compute intersection polynomials of paths and stars. We also prove that the intersection polynomial contains non-zero constant term if and only if the signed intersection graph is positive and bipartite.

In \cite{GMT}, Gross, Mansour and Tucker characterized connected ribbon graphs with constant polynomials, i.e., one of its partial dual is a tree. They also found examples of non-orientable ribbon graphs whose polynomials have only one (non-constant) term. The second purpose of this paper is to characterize bouquets whose partial-dual genus polynomials have only one term.

A bouquet is \emph{prime} if its intersection graph is connected. Then we will show that the partial-dual Euler genus polynomial of a prime non-orientable bouquet has only one non-constant term if and only if its intersection graph is trivial. For orientable ribbon graphs, they posed the following conjecture.
\begin{conjecture}\cite{GMT}\label{con-01}
There is no orientable ribbon graph having a non-constant partial-dual genus polynomial with only one non-zero coefficient.
\end{conjecture}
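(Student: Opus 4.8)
\medskip

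\noindent\emph{Proof proposal.} Since the statement above is a conjecture, the real task is to settle it; the plan is to attempt a proof and follow where it leads. First I would reduce to bouquets: the partial-dual genus polynomial is invariant under partial duality because $(G^{A})^{B}=G^{A\triangle B}$, and, as recalled above, every connected ribbon graph has a bouquet among its partial duals, so it suffices to treat bouquets. By the theorem quoted above, $^{\partial}\Gamma_{B}(z)$ for a bouquet $B$ is determined by the signed intersection graph of $B$; when $B$ is orientable that graph is an ordinary graph $G$ with all signs positive, so $B$ is recorded by a chord diagram with all chords unsigned, with $\mathbb{F}_{2}$-interlacement matrix $M=A(G)$.

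The engine of the argument would be the rank formula
$$\varepsilon\bigl(B^{A}\bigr)=\operatorname{rank}_{\mathbb{F}_{2}}M[A]+\operatorname{rank}_{\mathbb{F}_{2}}M[\,E(B)\setminus A\,],$$
where $M[S]$ is the principal submatrix of $M$ indexed by $S$. This follows from Euler's formula applied to the connected ribbon graph $B^{A}$: its vertices are the boundary components of $B|_{A}$ and its faces are the boundary components of $B|_{E(B)\setminus A}$, while for a bouquet $B|_{S}$ one has $\varepsilon(B|_{S})=\operatorname{rank}_{\mathbb{F}_{2}}M[S]$ and hence $1+|S|-\operatorname{rank}_{\mathbb{F}_{2}}M[S]$ boundary components. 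So, restricted to bouquets, the conjecture asserts that for every graph $G$ the number $\operatorname{rank}_{\mathbb{F}_{2}}A(G)[A]+\operatorname{rank}_{\mathbb{F}_{2}}A(G)[V(G)\setminus A]$ is never a positive constant as $A$ ranges over subsets of $V(G)$.

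This is the point at which I expect the attempted proof to break, and the natural move is to test highly symmetric $G$. Over $\mathbb{F}_{2}$ the adjacency matrix $A(K_{k})=J_{k}-I_{k}$ has kernel $\{0\}$ when $k$ is even and $\{0,\mathbf{1}\}$ when $k$ is odd, so $\operatorname{rank}_{\mathbb{F}_{2}}A(K_{k})=2\lfloor k/2\rfloor$. For $G=K_{n}$ and $|A|=k$ the formula then gives $\varepsilon(B^{A})=2\lfloor k/2\rfloor+2\lfloor(n-k)/2\rfloor$, which for \emph{odd} $n$ collapses to the constant $n-1$ for every $A$. Hence, for each odd $n\ge 3$, the orientable bouquet whose $n$ loops are pairwise interlaced satisfies $^{\partial}\Gamma_{B}(z)=2^{n}z^{(n-1)/2}$, a non-constant polynomial with a single non-zero coefficient; thus Conjecture~\ref{con-01} is false, the smallest witness being the three-loop bouquet with $^{\partial}\Gamma_{B}(z)=8z$.

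The main obstacle, and the part that really requires work, is the converse: characterizing \emph{all} non-empty bouquets whose partial-dual genus polynomial has exactly one term. Since this polynomial is multiplicative over the connected components of the signed intersection graph, and a product of polynomials with non-negative coefficients is a monomial only if each factor is, it is enough to handle the prime case. A one-term \emph{constant} polynomial is ruled out except for edgeless intersection graphs by the already-established criterion that the constant term is non-zero iff the signed intersection graph is positive and bipartite. For a one-term polynomial of positive degree one must show that $\operatorname{rank}_{\mathbb{F}_{2}}M[A]+\operatorname{rank}_{\mathbb{F}_{2}}M[\,E(B)\setminus A\,]$ being a positive constant forces the signed intersection graph to be $K_{n}$ with $n$ odd in the prime orientable case, and the trivial graph in the prime non-orientable case (as stated above). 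The technique would be to vary $A$ one element at a time, controlling how each of the two $\mathbb{F}_{2}$-ranks can change — by $0$ or $\pm1$, with parity restrictions coming from the symmetry and zero diagonal of the relevant submatrices — and to combine these constraints with the intersection-polynomial recursion to localize to connected pieces; reconciling all of these to pin down $G$ is the combinatorial heart of the argument.
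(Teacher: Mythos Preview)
Your proposal correctly settles the conjecture in the negative and lands on exactly the same family of counterexamples as the paper: the orientable bouquets whose intersection graph is $K_{n}$ with $n$ odd. The route, however, is genuinely different. You work through the $\mathbb{F}_{2}$-rank identity $\varepsilon(B^{A})=\operatorname{rank}M[A]+\operatorname{rank}M[A^{c}]$ and compute $\operatorname{rank}_{\mathbb{F}_{2}}(J_{k}-I_{k})=2\lfloor k/2\rfloor$ directly, which makes the constancy for odd $n$ transparent in one line. The paper, by contrast, does not rederive the counterexamples at all: it simply quotes them from the authors' earlier note (Proposition~\ref{le-06}), where they were obtained by tracking boundary components rather than linear algebra. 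Your argument is more conceptual for this half and has the bonus of immediately explaining the even-$n$ two-term formula as well.

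Where your proposal falls short of the paper is the converse characterization. The paper gives a complete proof (Theorems~\ref{le-07} and~\ref{le-09}) via a face-incidence argument: Lemma~\ref{le-05} forces each loop's two ribbon-sides onto distinct boundary components of $B$, and then equating $\varepsilon(B)$ with $\varepsilon(B^{\{e,f\}})$ for interlaced versus non-interlaced pairs shows that an induced $P_{3}$ in $I(B)$ is impossible, so $I(B)$ is complete. Your rank-variation sketch is pointed in a workable direction---since symmetric zero-diagonal $\mathbb{F}_{2}$-matrices have even rank, moving one vertex across the cut changes each summand by $0$ or $\pm 2$, and constancy forces these to cancel---but you do not actually extract a structural obstruction from this. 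The crux is precisely to rule out the $P_{3}$ (equivalently, to show that constancy of the rank sum on all two-element sets already forces any two interlaced loops to share the same pair of faces while any two non-interlaced loops do not), and your outline stops before supplying that step.
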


The conjecture is not true. In \cite{QYJ} we found an infinite family of counterexamples (see Proposition \ref{le-06}), whose intersection graphs are non-trivial complete graph of odd order. In this paper, we shall prove that Conjecture \ref{con-01} is actually true for all prime orientable bouquets except the family of counterexamples.

This paper is organized as follows. In Section 2, we consider the cyclic interlace sequences and an example is given to show that it can not determine the partial-dual genus polynomial. In Section 3, we recall the notion of mutant chord diagrams and a theorem of Chmutov and Lando on mutant chord diagrams and intersection graphs. In Section 4, we prove that the signed intersection graph can determine the partial-dual genus polynomial. In Section 5, we introduce the intersection polynomial and discuss its basic properties. In Section 6, we characterize all the bouquets whose partial-dual genus polynomials have only one term. In the final section, we pose several problems for further study.

\section{Cyclic interlace sequences and signed intersection graphs}
\noindent

Let $e$ be an edge of a ribbon graph $G$. If the vertex-disks at the ends of $e$ are distinct, we say that $e$ is {\it proper}.
If $e$ is a loop at the vertex disk $v$ and $e\cup v$ is homeomorphic
to a M\"obius band, then we call $e$ a {\it twisted loop}. Otherwise it is said to be an {\it untwisted loop} .

A \emph{signed rotation} of a bouquet is a cyclic ordering of the half-edges at the vertex and if the edge is an untwisted loop, then we give the same sign $+$ to the corresponding two half-edges, and give the different signs (one $+$, the other $-$) otherwise. The sign $+$ is always omitted. See Figure \ref{f0001} for an example. Sometimes we will use the signed rotation to represent the bouquet itself.
\begin{figure}[!htbp]
\begin{center}
\includegraphics[width=10cm]{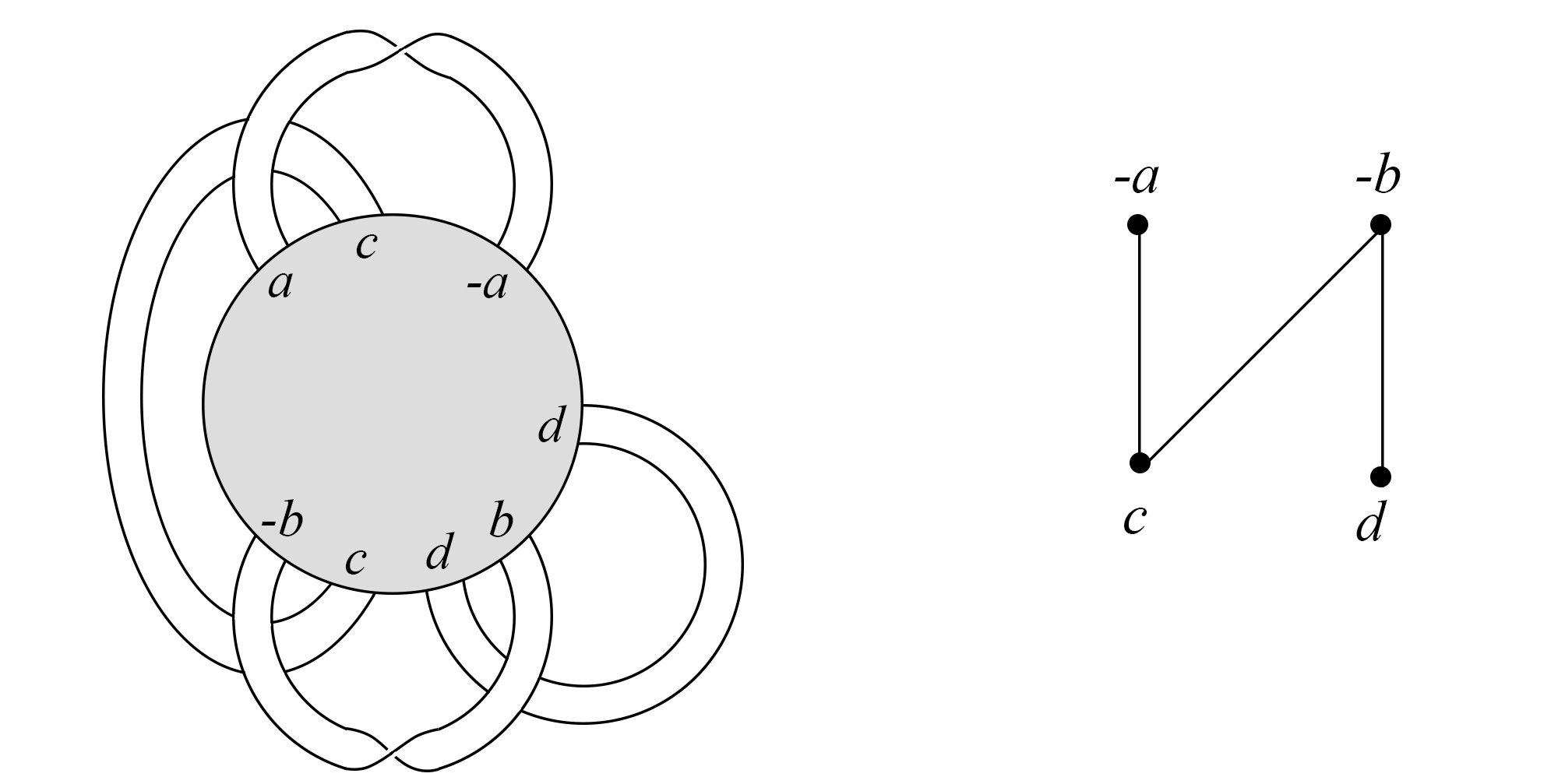}
\caption{ A bouquet with the signed rotation $(a, c, -a, d, b, d, c, -b)$ and its signed intersection graph.}
\label{f0001}
\end{center}
\end{figure}

Let $B$ be a bouquet and let $E(B)=\{e_{1}, \cdots, e_{n}\}$ and $e\in E(B)$.
The \emph{interlace number} of $e$, denoted by  $\alpha(e)$, is defined to be
the number of edges which are all interlaced with $e$.
We say that $\beta(e)$ is the \emph{signed interlace number} of $e$, where
\begin{eqnarray*}
\beta(e)=\left\{\begin{array}{ll}
                      \alpha(e), & \mbox{if}~e~\mbox{is an untwisted loop,}\\
                    -\alpha(e), & \mbox{if}~e~\mbox{is a twisted loop.}
                   \end{array}\right.
\end{eqnarray*}

The \emph{signed interlace sequence} \cite{QYJ} of the bouquet $B$, denoted by $\mathcal{S}(B)=(\beta(e_{1}), \cdots, \beta(e_{n}))$, is obtained by sorting the signed interlace number from small to large, where $\beta(e_1)\leq \beta(e_2)\leq\cdots\leq \beta(e_n)$. We first strengthen a signed interlace sequence by considering its cyclic rotation of a bouquet.

\begin{definition}
The \emph{cyclic interlace sequence} of a bouquet can be obtained from its cyclic rotation by replacing each entry with the signed interlace number of the corresponding edge.
\end{definition}

\begin{example}\label{ex-01}
Let $B_{1}$ be a bouquet with signed rotation $$(a, b, a, c, b, d, e, f, d, e, c, f)$$ and let $B_{2}$ be a bouquet with signed rotation $$(a, b, a, c, d, e, c, f, e, d, b, f).$$ It is easy to check that both bouquets $B_{1}$ and $B_{2}$
have the same cyclic interlace sequence $(1, 2, 1, 2, 2, 2, 2, 3, 2, 2, 2, 3)$. But $$^{\partial}\Gamma_{B_{1}}(z)=12z+44z^{2}+8z^{3}$$  and
$$^{\partial}\Gamma_{B_{2}}(z)=2+18z+36z^{2}+8z^{3}.$$
\end{example}

The {\it intersection graph} $I(B)$ of  a bouquet $B$ is the graph with vertex set $E(B)$ and in which two vertices $e$ and $f$ of $I(B)$ are adjacent if and only if their ends are met in the cyclic order $e\cdots f\cdots e\cdots f\cdots$ when traveling around the boundary of the unique vertex of $B$. Clearly, the degree sequence of the intersection graph is exactly the interlace sequence.
The {\it signed intersection graph} $SI(B)$ of a bouquet $B$ consists of $I(B)$ and a $+$ or $-$ sign at each vertex of $I(B)$ where the vertex corresponding to the untwisted loop of $B$ is signed $+$ and the vertex corresponding to the twisted loop of $B$ is signed $-$. See Figure \ref{f0001} for an example. A signed intersection graph is said to be \emph{positive} if each of its vertices is signed $+$. The following lemma is obvious.

\begin{lemma}\label{j1}
A bouquet $B$ is orientable if and only if its signed intersection graph $SI(B)$ is positive.
\end{lemma}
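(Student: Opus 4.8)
The plan is to prove the lemma by unwinding the two definitions involved, the only genuine ingredient being the elementary characterization of orientability of a bouquet in terms of its loops. Recall that a bouquet $B$ with edge set $\{e_{1},\dots,e_{n}\}$ is, as a surface with boundary, obtained from a single vertex-disc by attaching $n$ edge-ribbons, and that by the definitions above each $e_{i}$ is either a twisted loop (the ribbon $e_{i}$ together with the vertex-disc is homeomorphic to a M\"obius band) or an untwisted loop (it forms an annulus).

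First I would establish the reduction: \emph{$B$ is orientable if and only if $B$ has no twisted loop.} For one direction, if some $e_{i}$ is a twisted loop, then $e_{i}$ together with the vertex-disc is an embedded M\"obius band inside the surface underlying $B$, so that surface is non-orientable and hence $B$ is not orientable. For the converse, fix an orientation of the vertex-disc; if every $e_{i}$ is an untwisted loop, then this orientation extends across each ribbon $e_{i}$ in a compatible way (an untwisted ribbon meets the vertex-disc along its two ends orientation-coherently), and the local orientations agree on overlaps, so the whole surface is orientable. Thus $B$ is orientable precisely when none of its loops is twisted.

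Finally I would combine this with the definition of the signed intersection graph: by construction the vertex of $I(B)$ corresponding to $e_{i}$ is signed $+$ exactly when $e_{i}$ is an untwisted loop and signed $-$ exactly when $e_{i}$ is a twisted loop. Hence "$SI(B)$ is positive", i.e. every vertex of $SI(B)$ is signed $+$, is equivalent to "no $e_{i}$ is a twisted loop", which by the previous step is equivalent to "$B$ is orientable". I do not expect any real obstacle: the statement is essentially definitional, the only fact used (a bouquet is orientable iff it contains no twisted loop) is standard and provable in a couple of lines as sketched, and the remainder is just bookkeeping with the signs on $SI(B)$.
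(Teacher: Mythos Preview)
Your proposal is correct and is precisely the intended argument: the paper itself states the lemma as ``obvious'' and gives no proof, and the reasoning you supply (a bouquet is orientable iff it has no twisted loop, and the sign on each vertex of $SI(B)$ records exactly whether the corresponding loop is twisted) is the natural unpacking of that obviousness.
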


The signed intersection graphs of the bouquets $B_{1}$ and $B_{2}$ in Example \ref{ex-01} are shown in Figure \ref{f0002}. It shows that two bouquets with the same cyclic interlace sequence may have different signed intersection graphs.

\begin{figure}[!htbp]
\begin{center}
\includegraphics[width=9cm]{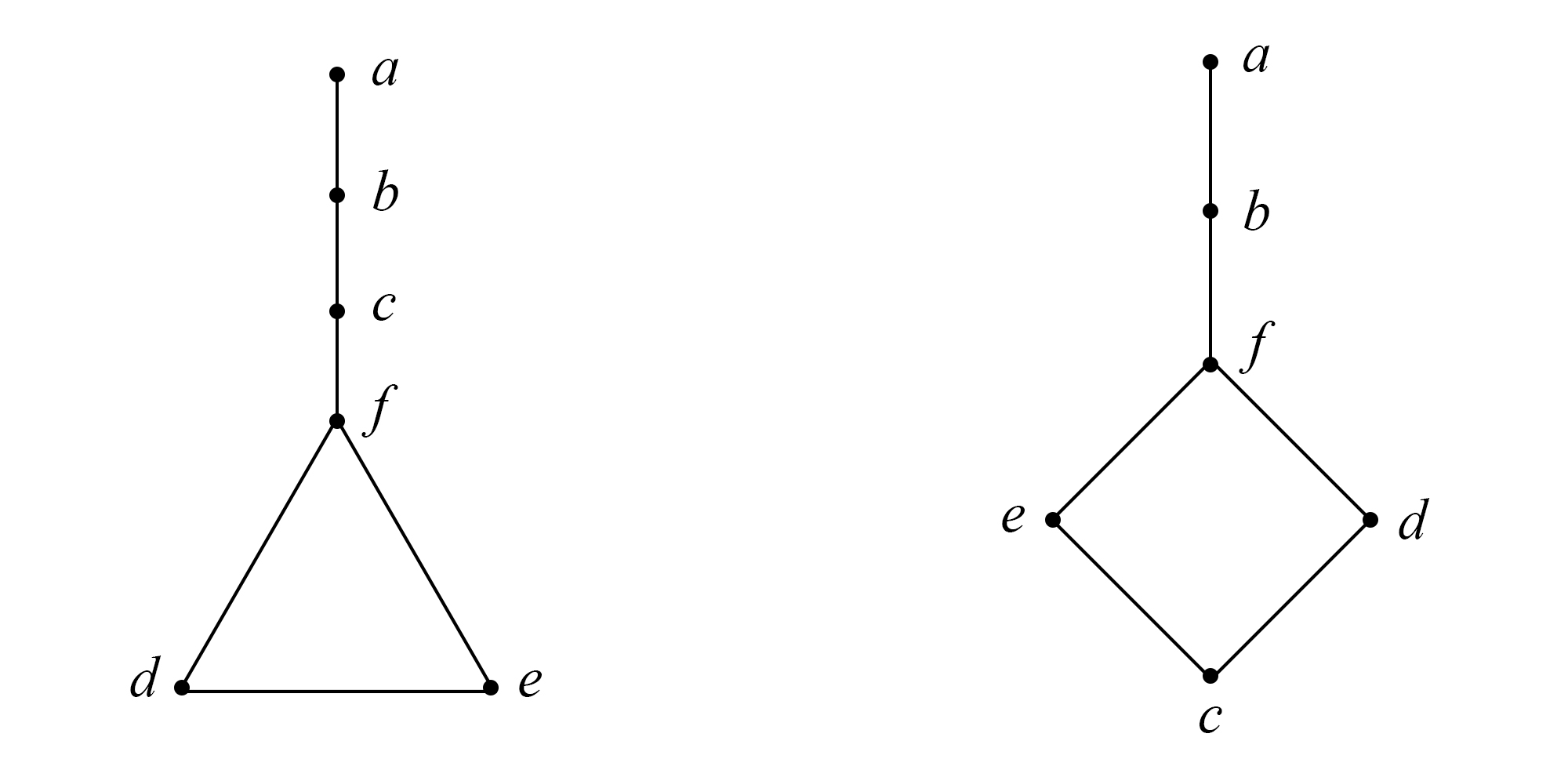}
\caption{$SI(a, b, a, c, b, d, e, f, d, e, c, f)$ and $SI(a, b, a, c, d, e, c, f, e, d, b, f)$.}
\label{f0002}
\end{center}
\end{figure}

In the following, we shall prove that signed intersection graphs can determine the partial-dual genus polynomial completely. In next section we will first recall mutants. 

\section{Mutants}
\noindent

In knot theory mutants are a pair of knots obtained from one to the other by rotating a tangle. Mutants are usually very difficult to distinguish by knot polynomials. 

A \emph{chord diagram} refers to a set of chords with distinct endpoints on a circle. A combinatorial analog of the tangle in mutant knots is a \emph{share}. A {\it share}  \cite{CSL} in a chord diagram is a union of two arcs of the outer circle and chords ending on them possessing the following property: each chord one of whose ends belongs to these arcs has both ends on these arcs. A {\it mutation} \cite{CSL} of a chord diagram is another chord diagram obtained by a rotation of a share about one of the three axes. Note that the composition of rotations about two of the three axes will be exactly the rotation about the third axis. Two chord diagrams are said to be {\it mutant} \cite{CSL} if they can be transformed into one another by a sequence of mutations.

\begin{theorem}\cite{CSL}\label{le-02}
Two chord diagrams have the same intersection graph if and only if they are mutant.
\end{theorem}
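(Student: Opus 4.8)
The plan is to prove the two implications separately, the direction ``mutant $\Rightarrow$ equal intersection graph'' being elementary and the converse resting on the structure theory of circle graphs. For the easy direction, since mutant diagrams are joined by finitely many mutations, it suffices to show that a single mutation preserves the intersection graph. Fix a chord diagram $D$ with a share whose two boundary arcs are $A$ and $B$, so that the circle is cut into arcs appearing cyclically as $A,P,B,Q$, where $P\cup Q$ is the complement of the share. By the defining property of a share, each chord of $D$ lies entirely in $A\cup B$ (an \emph{inner} chord) or entirely in $P\cup Q$ (an \emph{outer} chord), and a mutation either swaps the blocks $A$ and $B$, reverses each of $A$ and $B$ internally, or does both. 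Such an operation fixes every outer chord, so interlacements among outer chords are untouched; on the inner chords it acts by a bijection of endpoints that maps each boundary arc monotonically onto a boundary arc, and a short case check shows this preserves interlacement among inner chords. Finally, an inner chord $c$ and an outer chord $d$ interlace in $D$ if and only if $c$ has one end in $A$ and one in $B$ and $d$ has one end in $P$ and one in $Q$; a mutation never moves an endpoint out of its boundary arc, so it does not change whether $c$ has ends in both $A$ and $B$, and it leaves $d$ alone. Hence all interlacements, and so the intersection graph, are preserved.

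For the converse I would invoke Cunningham's split decomposition together with Bouchet's uniqueness theorem for prime circle graphs. Recall that a split of a graph $G$ is a partition $V(G)=V_1\sqcup V_2$ with $|V_1|,|V_2|\ge 2$ whose crossing edges induce a complete bipartite graph between some $A_1\subseteq V_1$ and $A_2\subseteq V_2$, and that every graph has a canonical split decomposition into prime graphs, stars, and complete graphs, recorded by a decomposition tree. The first step is the translation: in any chord diagram realizing a circle graph $G$, each split of $G$ is exactly realized by a pair of shares glued along a common pair of arcs, so a chord diagram inherits the split decomposition of its intersection graph, the chords of each part being confined to a share. The second step: within a non-prime part (a star or a complete graph), and for any reflection of a part, the chords can be rearranged freely by a mutation of the corresponding share, whereas for a prime part Bouchet's theorem guarantees that the chord diagram is determined up to reflection. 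The third step: given $D_1$ and $D_2$ with $I(D_1)\cong I(D_2)=G$, decompose both along the canonical split decomposition tree of $G$; the prime pieces agree up to reflection and the remaining pieces can be matched, every adjustment being a mutation, so composing these mutations carries $D_1$ to $D_2$.

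The main obstacle is the converse, and within it the step asserting that the abstract splits of the intersection graph correspond exactly to the geometric decompositions of any realizing chord diagram into shares glued along two arcs — this is where the substance of the theorems of Cunningham and Bouchet enters. The forward direction, and the bookkeeping that every admissible rearrangement of a non-prime piece is realized by a mutation of a share, are by comparison routine.
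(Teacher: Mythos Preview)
The paper does not prove this theorem at all: it is quoted from Chmutov and Lando \cite{CSL}, and immediately after stating it the authors write ``For the details we refer the reader to \cite{CSL}.'' So there is no proof in the paper to compare against; the statement functions purely as an imported black box used to derive Corollary~\ref{le-03}.

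That said, your outline tracks the original Chmutov--Lando argument faithfully. The forward direction is handled correctly: your case analysis of inner/inner, outer/outer, and inner/outer pairs is sound, and in particular the characterization ``$c$ and $d$ interlace iff $c$ has one end in each of $A,B$ and $d$ has one end in each of $P,Q$'' is exactly right and is preserved by each of the three rotations of a share. For the converse you invoke Cunningham's canonical split decomposition and Bouchet's rigidity of prime circle graphs, which is precisely the machinery Chmutov and Lando use. The one place your sketch is thin is the claim that every split of the intersection graph is realized geometrically by a pair of complementary shares in \emph{any} chord diagram realizing it; this is true, but it is a nontrivial lemma (essentially that circle graphs are closed under split decomposition in a way compatible with the chord picture) and deserves more than a sentence. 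You also need to argue carefully that the freedom in reassembling the pieces along the decomposition tree --- choosing how each marker chord is glued --- is exhausted by mutations; this is routine but requires an explicit induction on the tree. With those two points fleshed out, your proposal would constitute a complete proof along the lines of \cite{CSL}.
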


For the details we refer the reader to \cite{CSL}.  Mutant can be defined for bouquets similarly. Suppose $P=p_{1}p_{2}\cdots p_{k}$ is a \emph{string} and $P^{-1}=p_{k}p_{k-1}\cdots p_{1}$ is called the \emph{inverse} of $P$.

\begin{definition}\label{re}
Let $B$ be a bouquet with signed rotation $(MPNQ)$ where both labels of each edge must belong  to $MN$ or both not.
A {\it mutation} of $B$ is another bouquet with signed rotation $(M^{-1}PN^{-1}Q)$ or $(NPMQ)$. Two bouquets are said to be {\it mutant} if they can be transformed into one another by a sequence of mutations.
\end{definition}

In Definition \ref{re}, either $M, N, P$ or $Q$ can be empty.

\begin{corollary}\label{le-03}
Two bouquets have the same signed intersection graph if and only if they are mutant.
\end{corollary}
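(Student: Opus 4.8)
The plan is to reduce the statement to Theorem \ref{le-02} by setting up a correspondence between bouquets and chord diagrams that is compatible with mutation on both sides and that translates the signed intersection graph into the (unsigned) intersection graph of the associated chord diagram. First I would make the bookkeeping of signs explicit: given a bouquet $B$ with signed rotation, build a chord diagram $D(B)$ on a circle whose $2n$ marked points are the half-edges in the cyclic order given by the rotation, and where the chord for an edge $e$ joins its two half-edges. The sign data ($+$ for an untwisted loop, $-$ for a twisted loop) is extra decoration on chords; the underlying unsigned chord diagram forgets it. The key observation to record is that the intersection graph $I(D(B))$ of this chord diagram — two chords adjacent iff their endpoints alternate around the circle — is precisely the underlying graph $I(B)$, and the sign on a vertex of $SI(B)$ is recovered intrinsically from $B$ (it is $-$ exactly when $e$ is a twisted loop), so two bouquets have the same signed intersection graph iff they have the same underlying intersection graph \emph{and} each common edge carries the same sign.

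Next I would check that the two moves in Definition \ref{re} are exactly the chord-diagram mutations of Chmutov–Lando transported through $D(\cdot)$, \emph{and} that they preserve all signs. Writing the signed rotation as $(MPNQ)$ where the arcs carrying the labels of $M$ and $N$ together form a share (this is precisely the condition ``both labels of each edge belong to $MN$ or both not''), the move $(MPNQ)\mapsto(NPMQ)$ is the rotation of that share about one axis, and $(MPNQ)\mapsto(M^{-1}PN^{-1}Q)$ is the rotation about another; the third axis is their composition, as the paper already notes. I must verify that reversing a string $M\mapsto M^{-1}$ does not change whether a loop is twisted: a loop internal to $M$ has both half-edges reversed, so its local orientation data is unchanged, hence its sign is unchanged; a loop with one half-edge in $M$ and one outside cannot occur by the share condition. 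Likewise, swapping blocks $M\leftrightarrow N$ permutes half-edges without flipping any individual pair, so all signs persist. Therefore a sequence of bouquet-mutations induces a sequence of chord-diagram mutations on $D(B)$ keeping the sign labels fixed, and conversely every chord-diagram mutation of $D(B)$ respecting the decoration lifts to a bouquet-mutation.

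With this dictionary in place the proof is short. For the ``if'' direction: if $B$ and $B'$ are mutant, then $D(B)$ and $D(B')$ are mutant chord diagrams with matching sign decorations, so by Theorem \ref{le-02} they have the same intersection graph, i.e. $I(B)=I(B')$, and since mutations preserve signs, the decorations agree, giving $SI(B)=SI(B')$. For the ``only if'' direction: if $SI(B)=SI(B')$ then in particular $I(D(B))=I(D(B'))$, so by Theorem \ref{le-02} $D(B)$ and $D(B')$ are mutant chord diagrams; lifting each chord-diagram mutation to a bouquet-mutation as above — and using that the sign on each chord is determined by the bouquet and is preserved along the way, so the intermediate diagrams are all realizable as bouquets with the correct signs — shows $B$ and $B'$ are mutant.

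The main obstacle I anticipate is the sign bookkeeping in the ``only if'' direction: Theorem \ref{le-02} gives mutant chord diagrams, but a priori an intermediate chord diagram in that sequence need not obviously come from a bouquet, and one must be sure each elementary rotation can be chosen to act on a share that is compatible with the ``edges stay inside $MN$'' condition of Definition \ref{re} and that the twisted/untwisted status is transported correctly through it. This is handled by checking the two local moves carefully as above; once that is done, everything else is a direct translation between the two settings.
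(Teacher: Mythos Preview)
Your proposal is correct and follows essentially the same route as the paper's proof: the ``if'' direction is the (near-obvious) observation that bouquet mutations preserve the signed intersection graph, and the ``only if'' direction is Theorem~\ref{le-02} applied to the underlying chord diagrams, with the signs carried along passively. The paper's version is much terser (it does not spell out the sign bookkeeping you worry about); one small slip in your write-up is that an edge with one half-edge in $M$ and the other in $N$ \emph{can} occur under the share condition, but since both blocks are reversed in the move $(MPNQ)\mapsto(M^{-1}PN^{-1}Q)$ the sign of such an edge is still preserved, so your conclusion stands.
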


\begin{proof}
Obviously, mutations preserve the signed intersection graphs of bouquets, hence if two bouquets are mutant, they have the same signed intersection graph. Conversely, if two bouquets have the same signed intersection graph, by Theorem \ref{le-02}, they are related by a sequence of mutations.
\end{proof}

In the next section, we will show that the signed intersection graphs can determine the partial-dual genus polynomial completely.

\section{First Main Theorem}
\noindent

Now we state our first main theorem as follows.

\begin{theorem}\label{main-1}
If two bouquets $B_{1}$ and $B_{2}$ have the same signed intersection graph, then $^{\partial}\varepsilon_{B_{1}}(z)={^{\partial}}\varepsilon_{B_{2}}(z)$.
\end{theorem}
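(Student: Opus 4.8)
The plan is to reduce the statement to Corollary~\ref{le-03} together with an analysis of how a single mutation affects the partial-dual Euler genus polynomial. By Corollary~\ref{le-03}, $B_1$ and $B_2$ are related by a sequence of mutations, so it suffices to prove that a single mutation of a bouquet preserves $^{\partial}\varepsilon$. Thus I would fix a bouquet $B$ with signed rotation $(MPNQ)$, where the two labels of every edge lie both inside $MN$ or both outside (i.e.\ both in $PQ$ together, but arranged as prescribed), and let $B'$ be the bouquet with signed rotation $(M^{-1}PN^{-1}Q)$; the other elementary mutation $(NPMQ)$ can be handled the same way (or derived, as the excerpt notes, since the three share-rotations compose).

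\medskip
\noindent\textbf{Key steps.} First I would set up notation for the \emph{share}: call $S$ the set of edges whose labels appear in $M$ or $N$ (the ``inside'' edges) and $T$ the remaining edges (whose labels all lie in $P$ or $Q$). Because of the share condition, no edge of $B$ has one label in $MN$ and one label in $PQ$; this is precisely the combinatorial feature that makes the mutation well defined. Second, I would observe that partial duality is local in the following sense: for any $A\subseteq E(B)$, writing $A = A_S \cup A_T$ with $A_S\subseteq S$ and $A_T\subseteq T$, the signed rotation of $B^{A}$ can be computed and its effect on the share structure tracked. The crucial claim is that the mutation operation on $B$ ``commutes'' with taking partial duals: there is a bijection $A\mapsto A$ (the identity on edge sets, since $B$ and $B'$ have the same edge set) under which $B^{A}$ and $(B')^{A}$ are themselves related by the same kind of mutation, hence — by an inductive/self-referential argument or by a direct Euler-characteristic count — have the same Euler genus. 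Third, I would make this precise by recalling that $\varepsilon(G) = 2|V(G)| - 2k(G) + |E(G)| - |V(G)| + |E(G)| \cdots$ — more usefully, that the Euler genus of a ribbon graph is determined by its number of boundary components via Euler's formula $\varepsilon = 2 - v + e - f$ applied to each component, so it suffices to show that $B^A$ and $(B')^A$ have the same number of boundary components and the same number of connected components. The number of boundary components of a partial dual $B^A$ is the number of faces, which can be read from the ``trip matrix'' / interlacement data over $\mathrm{GF}(2)$; and mutation, being a rotation of a share, does not change the relevant $\mathrm{GF}(2)$-bilinear-form data, only relabels it.

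\medskip
\noindent\textbf{Main obstacle.} The hard part will be the bookkeeping in the second step: showing that the share structure is inherited by $B^A$ in a way compatible with the mutation, i.e.\ that performing the share-rotation and then a partial dual with respect to an arbitrary $A$ gives the same result (up to relabeling and homeomorphism) as performing the partial dual first and then the correspondingly-transported share-rotation. One must be careful that $A$ can mix edges of $S$ and of $T$, and that partial duality with respect to a proper/non-proper edge changes which loops are twisted, potentially altering signs in the rotation; one must check the share condition is preserved under $\cdot^{A}$. I expect the cleanest route is to avoid tracking rotations explicitly and instead invoke an invariant: encode $B$ by the symmetric (or not-necessarily-symmetric, over $\mathrm{GF}(2)$) interlacement matrix with the diagonal recording loop signs, recall the known formula expressing $\varepsilon(B^A)$ in terms of the rank over $\mathrm{GF}(2)$ of a submatrix determined by $A$ (the Bollob\'as--Riordan / delta-matroid description of partial duals), and then note that mutation corresponds to a simultaneous permutation of rows and columns of this matrix, which preserves all such ranks. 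Once that invariance is in hand, $\varepsilon(B_1^{A}) = \varepsilon(B_2^{A})$ for all $A$ under the induced bijection of edge subsets, and summing over $A$ yields $^{\partial}\varepsilon_{B_1}(z) = {}^{\partial}\varepsilon_{B_2}(z)$.
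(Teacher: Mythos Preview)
Your primary route---showing that $B^{A}$ and $(B')^{A}$ are related by a mutation---has a genuine gap: the partial dual $B^{A}$ of a bouquet is in general not a bouquet, so mutation in the sense of Definition~\ref{re} is not even defined for $B^{A}$. Extending mutation to arbitrary ribbon graphs and then proving that the extension preserves Euler genus would be a separate project, not mere bookkeeping; the share structure you would need to track through a partial dual is exactly what breaks, since dualizing edges in $S$ splits the single vertex disc into several discs in a way that intermingles the arcs $M,P,N,Q$.

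Your fallback $\mathrm{GF}(2)$/delta-matroid route can be made rigorous, but the paper's argument is both different and considerably lighter, and the key lemma you are missing is already available. The decisive ingredient is Lemma~\ref{le-01} (from~\cite{GMT}): for a bouquet, $\varepsilon(B^{A})=\varepsilon(A)+\varepsilon(A^{c})$, where $A$ and $A^{c}$ on the right denote the sub-bouquets on those edge sets. Since the signed intersection graph of the sub-bouquet $A$ is precisely the induced subgraph $SI(B)[A]$, this reduces the whole theorem to the statement that the Euler genus of a \emph{bouquet} is determined by its signed intersection graph (Lemma~\ref{le-04}); that is proved via a single mutation and a direct boundary-component count, entirely within the bouquet setting. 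Thus the paper never needs to analyze how mutation interacts with partial duality, nor any rank formula for $\varepsilon(B^{A})$: Lemma~\ref{le-01} collapses the problem to the non-dual case, which is where the mutation argument legitimately lives.
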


Recall that the contraction $G/e$ of the edge $e$ in the ribbon graph $G$ is defined by the equation $G/e:=G^{e}-e$. We denote by $G/A$ the ribbon graph obtained from $G$ by contracting each edge of $A\subseteq E(G)$ and then $G/A=G^{A}-A.$  It is an important observation \cite{EM,GJY} that the operation of the contraction does not change the number of boundary components. Let $v(G), e(G)$ and $f(G)$ denote the number of vertices, edges and boundary components of a ribbon graph $G$, respectively. To prove Theorem \ref{main-1}, we need three lemmas.

\begin{lemma}\label{J2}
Let $B$ be a bouquet. Then the Euler genus $\varepsilon(B)$ is given by the equation:
\begin{eqnarray}
\varepsilon(B)=1+e(B)-f(B).
\end{eqnarray}
\end{lemma}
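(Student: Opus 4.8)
The plan is to prove Lemma~\ref{J2} by a standard Euler-characteristic bookkeeping argument, specialized to the one-vertex case. Recall that for any ribbon graph $G$ the Euler genus is defined via $\varepsilon(G) = 2c(G) - (v(G) - e(G) + f(G))$, where $c(G)$ is the number of connected components and $v(G), e(G), f(G)$ count vertices, edges and boundary components respectively; this is just the statement that the Euler characteristic of the underlying surface is $v - e + f$. A bouquet $B$ is connected with exactly one vertex, so $c(B) = 1$ and $v(B) = 1$. Substituting these values gives $\varepsilon(B) = 2 - (1 - e(B) + f(B)) = 1 + e(B) - f(B)$, which is exactly the claimed identity.

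\textbf{Key steps, in order.} First I would recall (or cite from \cite{EM}) the general Euler-genus formula $\varepsilon(G) = 2c(G) - v(G) + e(G) - f(G)$ for a ribbon graph $G$, noting that this holds uniformly for orientable and non-orientable ribbon graphs because $\varepsilon$ is defined precisely so as to make the Euler characteristic come out right. Second, I would observe that a bouquet is by definition a connected ribbon graph with a single vertex, hence $c(B)=1$ and $v(B)=1$. Third, I would plug these two values into the general formula and simplify: $\varepsilon(B) = 2\cdot 1 - 1 + e(B) - f(B) = 1 + e(B) - f(B)$. That completes the proof.

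\textbf{The main obstacle}, such as it is, is not a mathematical difficulty but a matter of which normalization of Euler genus the reader has in mind; the only thing one must be careful about is to invoke the correct sign conventions in the defining relation between Euler genus and Euler characteristic, and to make explicit that ``bouquet'' forces both connectedness and $v(B)=1$. Once those conventions are pinned down, the lemma is an immediate one-line substitution. I would therefore keep the proof short, essentially citing the general formula and specializing it.
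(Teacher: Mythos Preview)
Your proposal is correct and follows essentially the same approach as the paper: both invoke the general Euler-genus/Euler-characteristic relation for ribbon graphs and specialize to $v(B)=1$ (the paper simply uses the connected form $2-\varepsilon(G)=v(G)-e(G)+f(G)$ rather than the version with $c(G)$). There is no substantive difference.
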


\begin{proof}
Recall that if $G$ is a connected ribbon graph then $2-\varepsilon(G)=v(G)-e(G)+f(G)$. The Lemma then follows from $v(B)=1$.
\end{proof}

\begin{lemma}\label{le-04}
If two bouquets $B_{1}$ and $B_{2}$ have the same signed intersection graph, then $\varepsilon(B_{1})=\varepsilon(B_{2})$.
\end{lemma}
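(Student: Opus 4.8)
The plan is to show that Euler genus is invariant under a single mutation, and then to invoke Corollary \ref{le-03}, which tells us that $B_1$ and $B_2$ are related by a finite sequence of mutations. Since a composition of invariances is an invariance, it suffices to treat one mutation move at a time, i.e.\ to prove that if $B$ has signed rotation $(MPNQ)$ (with the property that the two labels of every edge lie both in $MN$ or both outside) and $B'$ is obtained as $(M^{-1}PN^{-1}Q)$ or as $(NPMQ)$, then $\varepsilon(B)=\varepsilon(B')$. By Lemma \ref{J2} this is equivalent to showing $f(B)=f(B')$, since $B$ and $B'$ have the same single vertex and the same edge set, hence the same $e(B)=e(B')$.

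The core of the argument is therefore a boundary-counting computation. First I would set up the standard description of the boundary components of a bouquet as the orbits of a permutation built from the signed rotation (each half-edge contributing, with the sign recording whether one switches "sides" when passing through a twisted loop). The hypothesis on the share $(MPNQ)$ — that no edge has one end inside $MN$ and the other outside — is exactly what guarantees that the arcs carrying $M$, $N$ and the chords among them form a genuine share in the sense of Section 3; this is the combinatorial analog of a tangle, so rotating it cannot change how the boundary curves close up globally. Concretely, I would track how a boundary curve enters and leaves the share through its (at most two) boundary arcs and observe that reversing $M$ and $N$ simultaneously (the $(M^{-1}PN^{-1}Q)$ move) or swapping the two blocks (the $(NPMQ)$ move) permutes the connection pattern of these entry/exit points in a way that preserves the cycle structure, hence preserves $f$.

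Alternatively — and this is probably the cleaner route — I would avoid re-deriving the boundary count from scratch by passing through chord diagrams. A bouquet's boundary components correspond bijectively to the connected components of the curve system obtained by doubling the chord diagram associated to its signed rotation, and mutation of bouquets corresponds precisely to mutation of the underlying (signed) chord diagrams. Theorem \ref{le-02} then already gives that mutant chord diagrams have the same intersection graph; what I actually need is the complementary fact that a mutation, being a rotation of a share about an axis, is realized by a homeomorphism of the surface (rotate the disk containing the tangle), so it preserves the number of boundary components. Spelling out that a share rotation extends to such a homeomorphism of the ribbon-graph surface is the one place that requires care.

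The main obstacle I anticipate is precisely this last point: making rigorous that a share rotation lifts to a surface homeomorphism, or equivalently that the permutation controlling the boundary cycles is conjugated (not merely changed) by the mutation move. The bookkeeping with signs — a twisted loop flips orientation of the strand passing through it — is where an off-by-a-sign error could creep in, so I would handle the $(M^{-1}PN^{-1}Q)$ and $(NPMQ)$ moves separately and check a small example (e.g.\ using the bouquets of Example \ref{ex-01}, whose polynomials are recorded) as a sanity check that $\varepsilon$ indeed agrees. Once $f(B)=f(B')$ is established for a single move, the lemma follows immediately from Lemma \ref{J2} and induction on the length of the mutation sequence supplied by Corollary \ref{le-03}.
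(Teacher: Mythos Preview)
Your overall framework is exactly right and matches the paper: reduce via Corollary~\ref{le-03} to a single mutation, then via Lemma~\ref{J2} to showing $f(B)=f(B')$. Where your proposal stops short is precisely at the step you yourself flag as the main obstacle, and that step is the entire content of the lemma. Neither of your two routes is actually carried through: the entry/exit tracking is described only in words, and the homeomorphism claim --- that a share rotation extends to a homeomorphism of the ribbon-graph surface --- is, in the signed (possibly non-orientable) setting, essentially a restatement of what must be proved rather than an independent fact you can invoke. So as written this is a correct outline with the key idea still missing.

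The paper closes this gap with a concrete device you do not anticipate. It adjoins two auxiliary untwisted loops $e,f$ to the bouquet, turning $(MPNQ)$ into $G_1=(MfPfNeQe)$ (and similarly for the mutants); taking the partial dual with respect to $\{e,f\}$ physically separates the share from the rest into a three-vertex ribbon graph, and since $B_i=G_i^{\{e,f\}}/\{e,f\}$ and contraction preserves boundary components, one gets $f(B_i)=f(G_i^{\{e,f\}})$. After this separation the ``share'' piece (the vertex carrying $MN$) is literally the same bouquet for all three mutants once two marked arrow-arcs are ignored, and the complementary piece is identical on the nose. A short case analysis on whether those marked arcs lie on the same or different boundary components, and with consistent or inconsistent orientation, then yields $f(G_1^{\{e,f\}})=f(G_3^{\{e,f\}})=f(G_4^{\{e,f\}})$. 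This auxiliary-edge-plus-partial-dual trick is the missing mechanism that turns your outline into a proof.

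One small correction: the two bouquets in Example~\ref{ex-01} have \emph{different} signed intersection graphs (that is the whole point of the example --- same cyclic interlace sequence, different intersection graphs, different polynomials), so they are not mutant and cannot serve as a sanity check for this lemma.
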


\begin{proof}

By Corollary \ref{le-03}, we can assume that $B_{1}$ can be transformed into $B_{2}$ by a mutation. Let $B_{1}=(MPNQ)$. Then $B_{2}=(M^{-1}PN^{-1}Q)$ or $B_{2}=(NPMQ)$ as in Figure \ref{f0003}.
Assume that $B_{3}=(M^{-1}PN^{-1}Q)$ and $B_{4}=(NPMQ)$.
\begin{figure}[!htbp]
\begin{center}
\includegraphics[width=12cm]{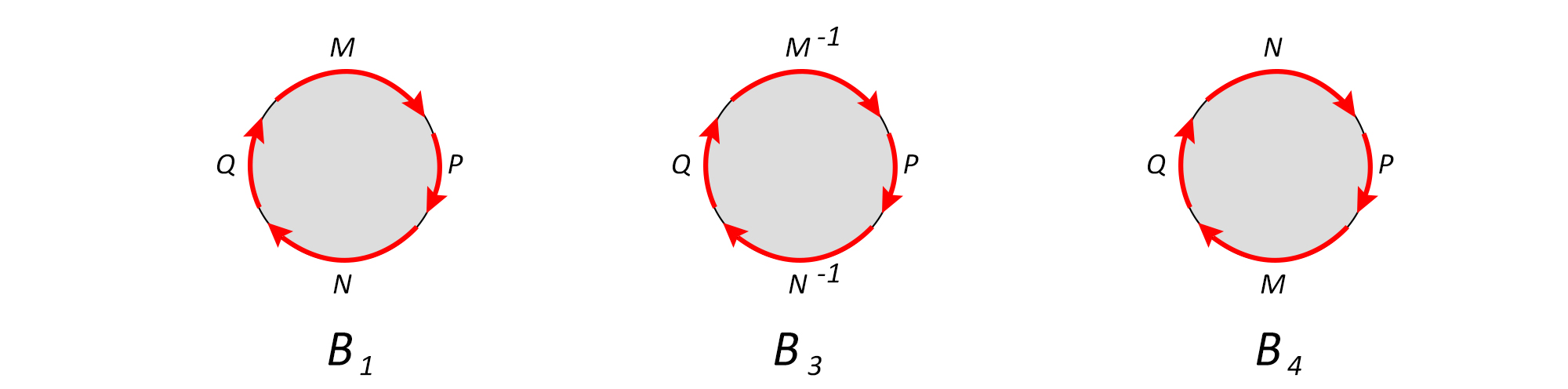}
\caption{The bouquets $B_{1}, B_{3}$ and $B_{4}$.}
\label{f0003}
\end{center}
\end{figure}
By Lemma \ref{J2}, it suffices to prove that $f(B_{1})=f(B_{3})=f(B_{4})$.

Suppose that $G_{1}=(MfPfNeQe), G_{3}=(M^{-1}fPfN^{-1}eQe)$ and $G_{4}=(NfPfMeQe)$ as in Figure \ref{f0004}.
\begin{figure}[!htbp]
\begin{center}
\includegraphics[width=13cm]{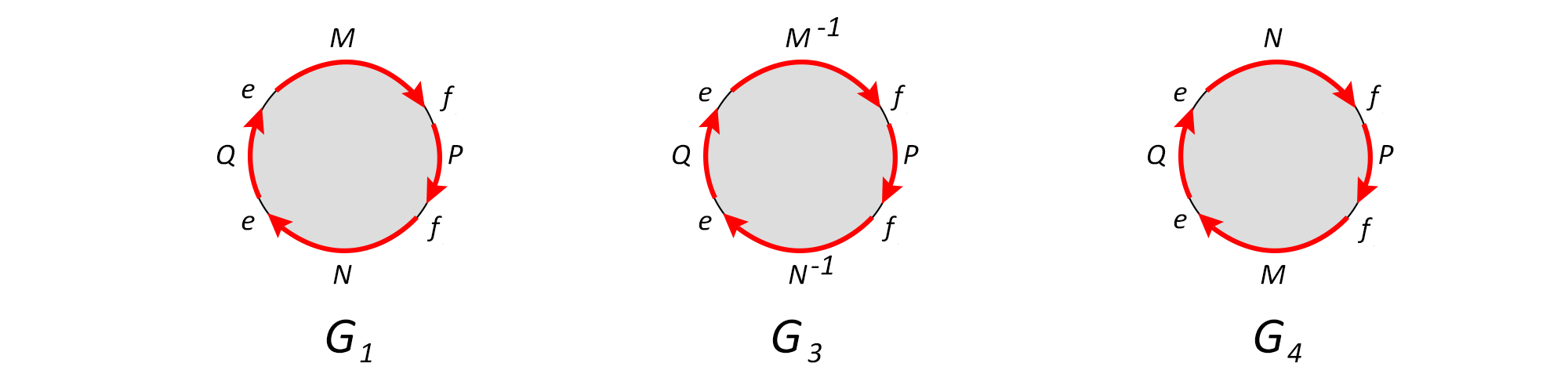}
\caption{The bouquets $G_{1}, G_{3}$ and $G_{4}$.}
\label{f0004}
\end{center}
\end{figure}

Since  $B_{i}=G_{i}-\{e, f\}=({G_{i}}^{\{e,f\}})^{\{e,f\}}-\{e, f\}={G_{i}}^{\{e,f\}}/{\{e,f\}}$ for $i\in \{1, 3, 4\}$ and contraction does not change the number of boundary components, it follows that $f(B_{i})=f({G_{i}}^{\{e,f\}})$.
For the ribbon graph ${G_{i}}^{\{e,f\}}$, arbitrarily orient the boundary of $e$ and place an arrow on each of the
two arcs where $e$ meets vertices of ${G_{i}}^{\{e,f\}}$ such that the directions of these arrows follow the orientation of the boundary of $e$ and label the two arrows with $e'$ and $e''$. The same operating can be drawn for $f$ and label the two arrows with $f'$ and $f''$.
Let ${B_{i}}'$ denote the ribbon graph obtained from ${G_{i}}^{\{e,f\}}$ by deleting the vertices $v_{P}, v_{Q}$ together with all the edges incident with $v_{P}, v_{Q}$, but reserving the marking arrows  $e''$ and $f''$ as in Figure \ref{f0005}.
\begin{figure}[!htbp]
\begin{center}
\includegraphics[width=13cm]{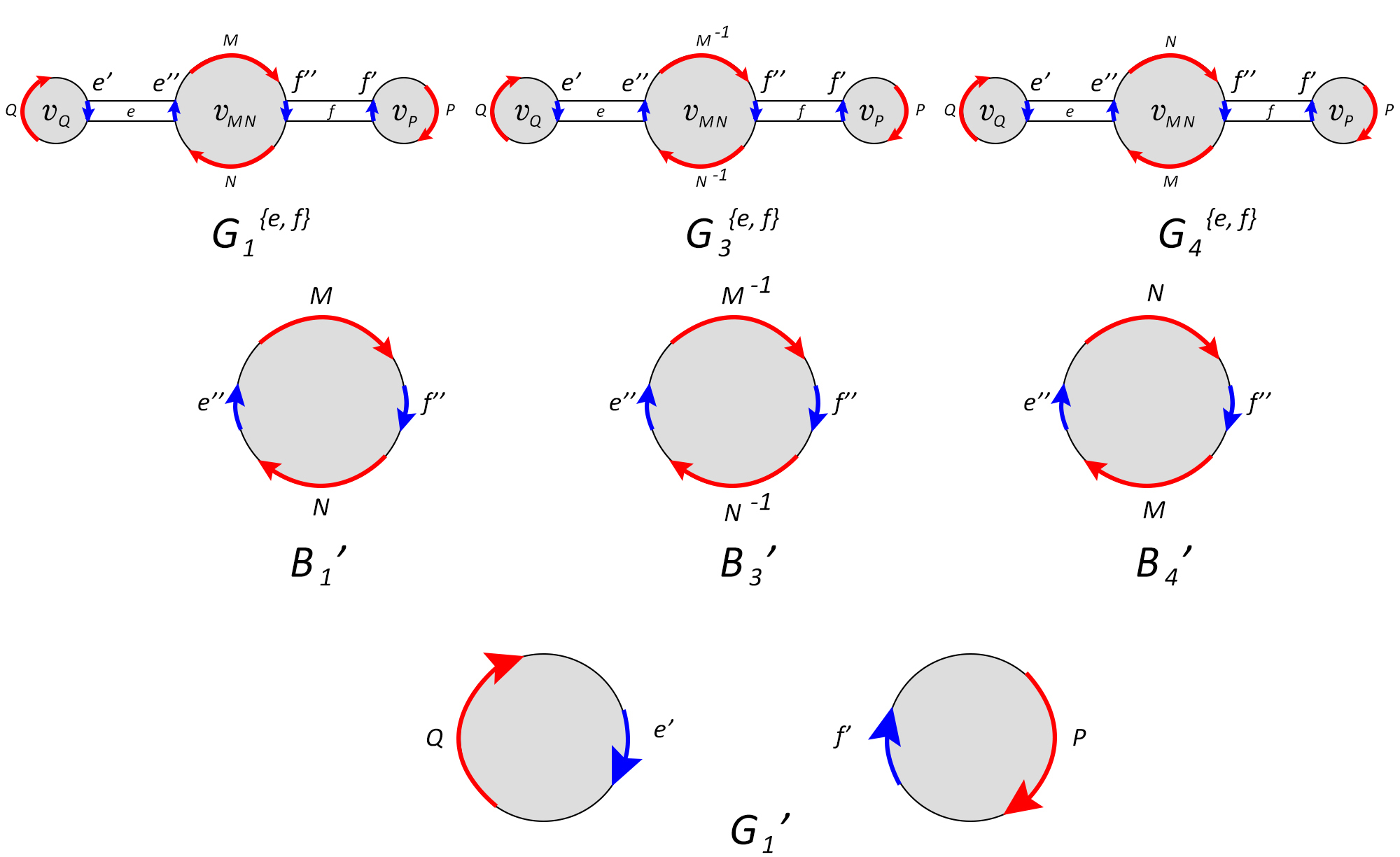}
\caption{The ribbon graphs ${G_{i}}^{\{e,f\}}$ and ${B_{i}}'$ for $i\in \{1, 3, 4\}$ and ${G_{1}}'$ .}
\label{f0005}
\end{center}
\end{figure}
Since both labels of each edge must belong to $MN$ or both not, this results in a bouquet, with exactly two labelled arrows $e''$ and $f''$ on its boundary of the vertex and these marking arrows only indicate the positions and no other significance.
Note that if we ignore the two labelled arrows $e''$ and $f''$, the bouquets  ${B_{1}}'$, ${B_{3}}'$ and ${B_{4}}'$ are equivalent. Hence $f({B_{1}}')=f({B_{3}}')=f({B_{4}}').$
Similarly, let ${G_{i}}'$ denote the ribbon graph obtained from ${G_{i}}^{\{e,f\}}$ by deleting the vertex $v_{MN}$ together with all the edges incident with $v_{MN}$,  but reserving the marking arrows  $e'$ and $f'$. This results in a ribbon graph, with exactly two labelled arrows $e'$ and $f'$ on the boundaries of  $v_{P}$ and $v_{Q}$ as in Figure \ref{f0005}. Note that ${G_{1}}'={G_{3}}'={G_{4}}'$.
Obviously, we can recover the boundaries of ${G_{i}}^{\{e,f\}}$ from ${G_{i}}'$ and ${B_{i}}'$ as follows: draw a line segment from the head of $e'$  to the tail of $e''$, and a line segment from the head of $e''$ to the tail of $e'$. The same operating is applied to $f'$ and $f''$. We observe that

({\bf a}) If $e''$ and $f''$ are contained in different boundary components of ${B_{1}}'$, then $e''$ and $f''$ are also contained in different boundary components of ${B_{3}}'$ and ${B_{4}}'$.

({\bf b}) If $e''$ and $f''$ are contained in the same boundary component of ${B_{1}}'$, then $e''$ and $f''$ are also contained in the same boundary component of ${B_{3}}'$ and ${B_{4}}'$.
The arrows $e''$ and $f''$ are called consistent (inconsistent) in ${B_{1}}'$  if these two arrows are consistent (inconsistent) on the boundary component.
We can also observe that if $e''$ and $f''$ are consistent (inconsistent) in ${B_{1}}'$, then $e''$ and $f''$ are also consistent (inconsistent) in ${B_{3}}'$ and ${B_{4}}'$.

If $e'$ and $f'$ are contained in the same boundary component of ${G_{1}}'$ and $e'$,  $f'$ are consistent in ${G_{1}}'$, then there are three cases as following.
\begin{enumerate}
  \item If $e''$ and $f''$ are contained in different boundary components of ${B_{1}}'$, then by (a)
  $$f({G_{1}}^{\{e,f\}})=f({G_{3}}^{\{e,f\}})=f({G_{4}}^{\{e,f\}})=f({G_{1}}')+f({B_{1}}')-2$$
as in Figure \ref{f0006}.
\begin{figure}[!htbp]
\begin{center}
\includegraphics[width=12cm]{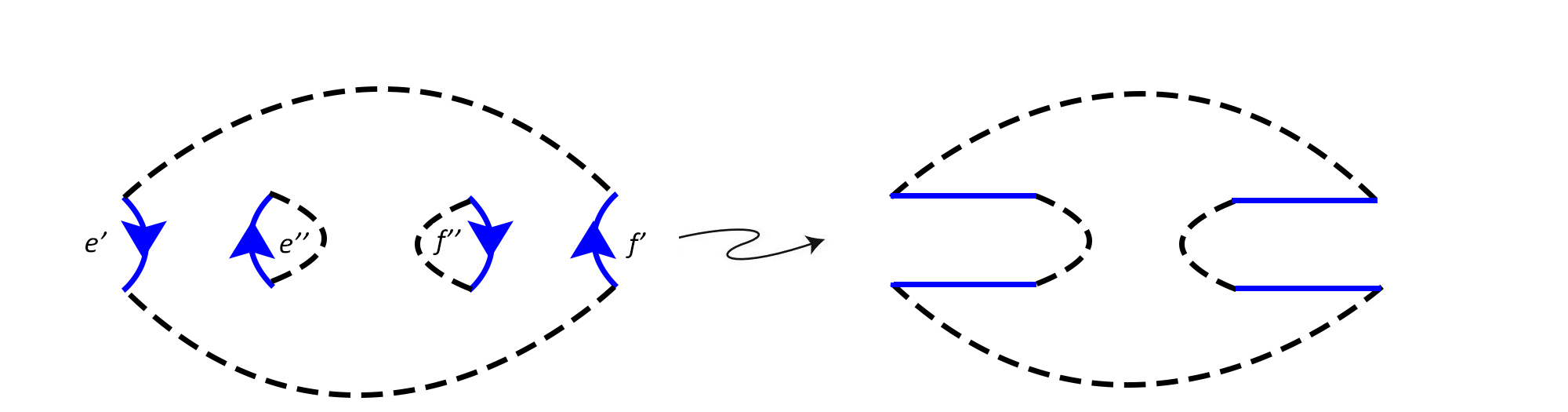}
\caption{Case 1.}
\label{f0006}
\end{center}
\end{figure}
  \item If $e''$ and $f''$ are contained in the same boundary component of ${B_{1}}'$ and $e''$, $f''$ are consistent in ${B_{1}}'$, then by (b)
  $$f({G_{1}}^{\{e,f\}})=f({G_{3}}^{\{e,f\}})=f({G_{4}}^{\{e,f\}})=f({G_{1}}')+f({B_{1}}')$$
 \begin{figure}[!htbp]
\begin{center}
\includegraphics[width=12cm]{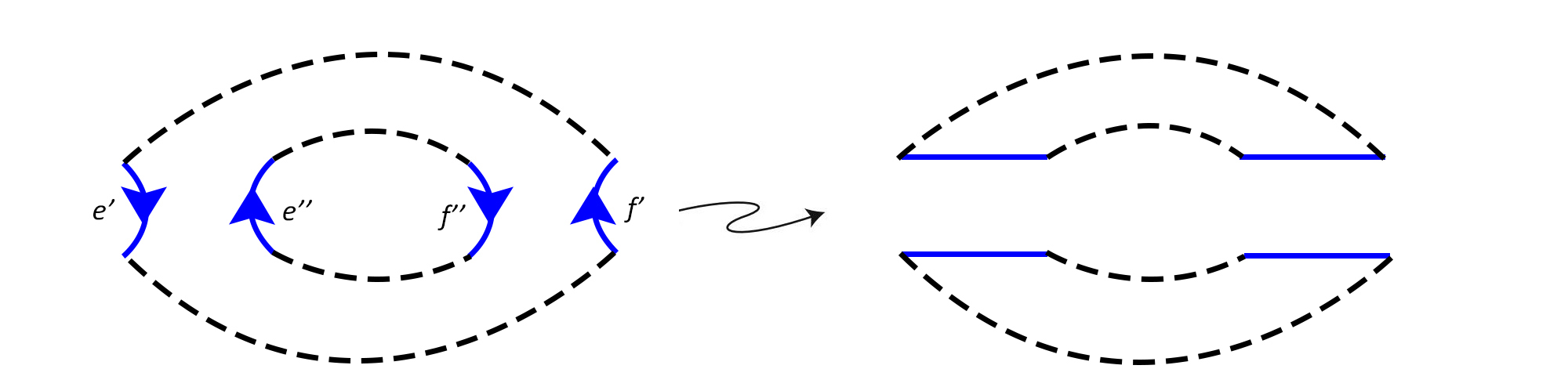}
\caption{Case 2.}
\label{f0007}
\end{center}
\end{figure}
as in Figure \ref{f0007}.
   \item If $e''$ and $f''$ are contained in the same boundary component of ${B_{1}}'$ and $e''$, $f''$ are inconsistent in ${B_{1}}'$, then by (b)
  $$f({G_{1}}^{\{e,f\}})=f({G_{3}}^{\{e,f\}})=f({G_{4}}^{\{e,f\}})=f({G_{1}}')+f({B_{1}}')-1$$
   \begin{figure}[!htbp]
\begin{center}
\includegraphics[width=12cm]{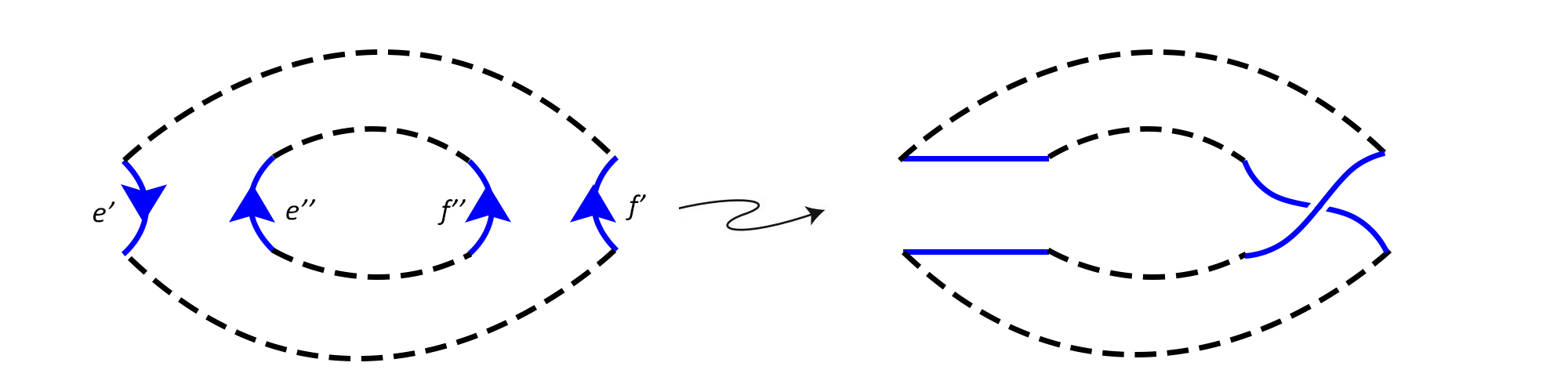}
\caption{Case 3.}
\label{f0008}
\end{center}
\end{figure}
as in Figure \ref{f0008}.
\end{enumerate}
Similar arguments apply to the case $e'$ and $f'$ are contained in different boundary components of ${G_{1}}'$ or $e'$ and $f'$ are contained in the same boundary component of ${G_{1}}'$ and $e'$, $f'$ are inconsistent in ${G_{1}}'$.
\end{proof}

\begin{lemma}\label{le-01}\cite{GMT}
Let $B$ be a bouquet, and let $A\subseteq E(B)$. Then 
\begin{eqnarray}
\varepsilon(B^{A})=\varepsilon(A)+\varepsilon(A^{c}),
\end{eqnarray}
where $A^{c}=E(B)-A$.
\end{lemma}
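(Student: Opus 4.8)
The plan is to reduce the identity to counting boundary components and then to invoke Lemma~\ref{J2}. Throughout write $n=e(B)$ and, for $X\subseteq E(B)$, let $B[X]$ denote the spanning sub-bouquet of $B$ with edge set $X$ (this is again a bouquet), so that $\varepsilon(X)$ in the statement means $\varepsilon(B[X])$. Since $B$ is connected and partial duality preserves connectedness, $B^{A}$ is a connected ribbon graph with $e(B^{A})=e(B)=n$, and Euler's formula (as recalled in the proof of Lemma~\ref{J2}) gives
\[
\varepsilon(B^{A})=2-v(B^{A})+n-f(B^{A}).
\]
So it suffices to identify $v(B^{A})$ and $f(B^{A})$ with the boundary-component counts of $B[A]$ and $B[A^{c}]$.

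First I would compute $f(B^{A})$. By the definition of contraction of an edge set applied to the ribbon graph $B^{A}$, we have $(B^{A})/A=(B^{A})^{A}-A=B^{A\triangle A}-A=B-A=B[A^{c}]$, and since contracting edges does not change the number of boundary components we get $f(B^{A})=f\big((B^{A})/A\big)=f(B[A^{c}])$; the same argument with $A^{c}$ in place of $A$ yields $f(B^{A^{c}})=f(B[A])$. Next, since the geometric dual is the partial dual with respect to the whole edge set, $(B^{A})^{*}=(B^{A})^{E(B)}=B^{A\triangle E(B)}=B^{A^{c}}$, so $B^{A}=(B^{A^{c}})^{*}$; as geometric duality interchanges vertices with boundary components, $v(B^{A})=f(B^{A^{c}})=f(B[A])$ by the previous computation. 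Substituting both expressions into Euler's formula,
\[
\varepsilon(B^{A})=2+n-f(B[A])-f(B[A^{c}]).
\]
Finally, applying Lemma~\ref{J2} to the bouquets $B[A]$ and $B[A^{c}]$, namely $f(B[A])=1+|A|-\varepsilon(B[A])$ and $f(B[A^{c}])=1+|A^{c}|-\varepsilon(B[A^{c}])$, and using $|A|+|A^{c}|=n$, the right-hand side collapses to $\varepsilon(B[A])+\varepsilon(B[A^{c}])$, which is the claim.

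Apart from Lemma~\ref{J2} and Euler's formula, the only inputs are standard facts about partial duals — $(G^{A})^{B}=G^{A\triangle B}$, $G^{E(G)}=G^{*}$, the fact that geometric duality swaps vertices with boundary components, and (already quoted in the excerpt) that contraction preserves boundary components and equals ``partial-dualise then delete'' — all of which are covered by references such as \cite{CG, EM}. Consequently I do not expect a genuine obstacle: essentially all of the content sits in the identity $f(B^{A})=f(B[A^{c}])$, which becomes immediate once one reads $B-A$ as $(B^{A})/A$.
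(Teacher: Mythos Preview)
The paper does not give its own proof of this lemma; it is quoted directly from \cite{GMT}. So there is nothing to compare your argument against within the present paper.

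That said, your proof is correct and fully self-contained relative to the surrounding text. The identity $(B^{A})/A=(B^{A})^{A}-A=B-A=B[A^{c}]$ together with the boundary-preserving property of contraction (both already stated in the paper) gives $f(B^{A})=f(B[A^{c}])$, and the observation $(B^{A})^{*}=B^{A^{c}}$ then yields $v(B^{A})=f(B^{A^{c}})=f(B[A])$. Plugging these into Euler's formula and undoing Lemma~\ref{J2} on each piece gives the claim exactly as you wrote. All the partial-dual facts you invoke --- $(G^{A})^{B}=G^{A\triangle B}$, $G^{E(G)}=G^{*}$, and vertex/face interchange under $*$ --- are standard and covered by \cite{CG, EM}, so nothing is missing.
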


\vskip0.3cm

\noindent{\bf Proof of Theorem \ref{main-1}}

For any subset $A_1$ of edges of $B_{1}$, we denoted its corresponding vertex subset of $SI(B_{1})$ also by $A_1$.
Let $SI(B_{1})[A_1]$ denote the induced subgraph of $SI(B_{1})$ by the vertex subset $A_1$. Since $SI(B_{1})=SI(B_{2})$, there is a corresponding subset $A_2$ of vertices of $SI(B_{2})$ such that $SI(B_{1})[A_1]=SI(B_{2})[A_2]$ and $SI(B_{1})[A_1^{c}]=SI(B_{2})[A_2^{c}]$. It follows that $\varepsilon(A_1)=\varepsilon(A_2)$ and
$\varepsilon(A_1^{c})=\varepsilon(A_2^{c})$ by Lemma \ref{le-04}. Hence, $\varepsilon({B_{1}}^{A_1})=\varepsilon({B_{2}}^{A_2})$ by Lemma \ref{le-01}. Thus $^{\partial}\varepsilon_{B_{1}}(z)={^{\partial}}\varepsilon_{B_{2}}(z)$.

\begin{remark}\label{rem1}
Two bouquets with different signed intersection graphs may have the same partial-dual genus polynomial. For example, let
$B_{1}=(1, 2, -1, 2)$ and $B_{2}=(1, 2, -1, -2)$. Obviously, ${^{\partial}}\varepsilon_{B_{1}}(z)={^{\partial}}\varepsilon_{B_{2}}(z)=2z+2z^{2}$, see also \cite{QYJ},  but the signed intersection graphs of $B_{1}$ and $B_{2}$ are different. In fact, $B_2=B_1^{e_1}$.
\end{remark}

\section{Intersection polynomials}
\noindent

A graph $SG$ with a $+$ or $-$ sign at each vertex is said to be \emph{signed intersection graph} if there exists a bouquet $B$ such that  $SG=SI(B)$. 
The \emph{intersection polynomial}, $IP_{SG}(z)$, of a signed intersection graph $SG$ is defined by $IP_{SG}(z):={^{\partial}}\varepsilon_{B}(z)$, where $B$ is a bouquet such that $SG=SI(B)$. By Theorem \ref{main-1}, it is well-defined.

\begin{theorem}\label{le-10}
Let $SG$ be a signed intersection graph and $v_{1}, v_{2}\in V(SG)$.
If $v_{1}, v_{2}$ are adjacent and the degree of $v_{1}$ is 1 and the sign of $v_{1}$ is positive, then
$$IP_{SG}(z)=IP_{SG-v_{1}}(z)+(2z^{2})IP_{SG-v_{1}-v_{2}}(z).$$
\end{theorem}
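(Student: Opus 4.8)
The plan is to realize $SG$ by an explicit bouquet in which the degree-one positive vertex $v_1$ corresponds to a conveniently placed untwisted loop, and then split the sum $\sum_{A}z^{\varepsilon(B^A)}$ according to whether or not the edge $e_1$ (corresponding to $v_1$) lies in $A$. Concretely, since $v_1$ has degree $1$ and its only neighbour is $v_2$, I would pick a bouquet $B$ with signed rotation of the form $(\,e_1\, e_2\, e_1\, e_2\, R\,)$, where $R$ is a string on the remaining edges chosen so that the induced signed intersection graph on $V(SG)\setminus\{v_1\}$ is $SG-v_1$ and $v_2$ is interlaced with exactly those edges in $R$ prescribed by $SG$; this is possible precisely because $v_1$ is interlaced only with $v_2$ (so $e_1$'s two half-edges can be placed as a short chord straddling only one half-edge of $e_2$) and $v_1$ is signed $+$ (so $e_1$ is an untwisted loop). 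Using $IP_{SG}(z)={^{\partial}}\varepsilon_B(z)$ and Lemma \ref{le-01}, I would write
$$
IP_{SG}(z)=\sum_{A\subseteq E(B),\ e_1\notin A} z^{\varepsilon(A)+\varepsilon(A^c)}+\sum_{A\subseteq E(B),\ e_1\in A} z^{\varepsilon(A)+\varepsilon(A^c)}.
$$

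For the first sum, $e_1\in A^c$ always. Because $v_1$ has degree one in $SG$, in the induced subgraph $SI(B)[A^c]$ the vertex $v_1$ is either isolated (if $v_2\notin A^c$) or pendant with neighbour $v_2$ (if $v_2\in A^c$), and in either case $v_1$ being a positive (untwisted, non-interlaced-with-the-rest) loop contributes nothing to the Euler genus: adding a non-interlaced untwisted loop to a bouquet does not change the number of boundary components, hence does not change $\varepsilon$. Therefore $\varepsilon(A^c)=\varepsilon(A^c\setminus\{e_1\})$ for every such $A$, and the first sum collapses to $\sum_{A\subseteq E(B)\setminus\{e_1\}} z^{\varepsilon(A)+\varepsilon(A^c\setminus\{e_1\})}={^{\partial}}\varepsilon_{B-e_1}(z)=IP_{SG-v_1}(z)$, using that $SI(B-e_1)=SG-v_1$.

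For the second sum, $e_1\in A$, so $A=\{e_1\}\cup A'$ with $A'\subseteq E(B)\setminus\{e_1\}$, and $\varepsilon(B^A)=\varepsilon(A)+\varepsilon(A^c)$ with $A^c=E(B)\setminus(\{e_1\}\cup A')$. Here $\varepsilon(A^c)=\varepsilon(A^c)$ computed in $SG-v_1$, i.e. $\varepsilon$ of the induced subgraph on $A^c$ in $SG-v_1$. The key local computation is to evaluate $\varepsilon(A)=\varepsilon(\{e_1\}\cup A')$ in terms of whether $v_2\in A'$: if $v_2\notin A'$ then $v_1$ is again an isolated positive loop in $SI(B)[A]$ so $\varepsilon(A)=\varepsilon(A')$; if $v_2\in A'$ then $v_1$ is pendant on $v_2$ with the $+$ sign, and a short analysis of the rotation (the chord $e_1$ straddling one half-edge of $e_2$) shows $\varepsilon(\{e_1\}\cup A')=\varepsilon(A')+2$ — geometrically, such a configuration is an orientable handle relative to $e_2$, raising the orientable genus by $1$ and the Euler genus by $2$. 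Combining, the second sum equals $\sum_{A'\not\ni v_2} z^{\varepsilon(A')+\varepsilon(A'^c)}+\sum_{A'\ni v_2} z^{2}\,z^{\varepsilon(A')+\varepsilon(A'^c)}$, where now everything is computed inside $SG-v_1$. Pairing up each $A'$ with $A'\triangle\{v_2\}$ and noting that in $SG-v_1-v_2$ the roles of $v_2\in A'$ versus $v_2\notin A'$ are symmetric (both give the same induced subgraph on the rest), the terms with $v_2\notin A'$ together with the $z^2$-weighted terms with $v_2\in A'$ reorganize into $(1+z^2)$ times a sum over subsets of $SG-v_1-v_2$ once one correctly accounts for the contribution of $e_2$ itself; carrying this bookkeeping through, and using $\varepsilon(\{e_2\}\cup B'')=\varepsilon(B'')+\varepsilon(\{e_2\}\text{ relative data})$, the second sum simplifies to $(2z^2)\,IP_{SG-v_1-v_2}(z)$.

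The main obstacle is the second-sum bookkeeping: one must show that the $z^2$-weighted contribution (from $e_1$ forming a handle with $e_2$) combined with the reorganization over whether $e_2$ itself lies in $A$ produces exactly the coefficient $2z^2$ and exactly the polynomial $IP_{SG-v_1-v_2}(z)$, with no leftover cross terms depending on how $e_2$ interlaces the rest of the graph. I expect this to follow from a careful application of Lemma \ref{le-01} together with the observation that, once $v_1$ is deleted, the edge $e_2$ can be slid along the rotation (a mutation, hence Euler-genus-neutral on induced subgraphs by Lemma \ref{le-04}) to a position where it is a short chord, making the pairing $A'\leftrightarrow A'\triangle\{e_2\}$ transparent; the factor $2$ then comes from the two independent choices (whether $e_1\in A$ contributes the handle, and the symmetric pairing of $e_2$), each genuinely doubling the count. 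Verifying there is no overcounting or sign subtlety in the non-orientable strata is the delicate point, and I would check it against the small cases $SG=K_2$ (one positive pendant edge) and the path $P_3$ before trusting the general argument.
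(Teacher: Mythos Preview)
Your split by whether $e_1\in A$ does not produce the two pieces you claim. The assertion that, when $e_1\in A^c$, the loop $e_1$ ``contributes nothing to the Euler genus'' fails precisely in the pendant case: if $e_2\in A^c$ as well, then $e_1$ \emph{is} interlaced with $e_2$ inside the sub-bouquet on $A^c$, and an interlaced pair of untwisted loops raises $\varepsilon$ by $2$. Already for $SG=K_2$ with $B=(e_1,e_2,e_1,e_2)$ your first sum is $z^{\varepsilon(B)}+z^{\varepsilon(B^{e_2})}=z^2+1$, not $IP_{K_1}(z)=2$. The companion claim in the second sum, that $\varepsilon(\{e_1\}\cup A')=\varepsilon(A')+2$ whenever $e_2\in A'$, is equally false: for $SG=P_3$ realized by $B=(e_1,e_2,e_1,e_3,e_2,e_3)$ and $A'=\{e_2,e_3\}$ one has $\varepsilon(A')=2$ and $\varepsilon(\{e_1\}\cup A')=\varepsilon(B)=2$, not $4$. (Separately, the rotation $(e_1,e_2,e_1,e_2,R)$ forces $e_2$ to interlace nothing in $R$, so it only realizes $SG$ when $v_2$ also has degree~$1$.) The hedged ``reorganization'' in your last paragraph cannot rescue this: the two sums you wrote simply are not $IP_{SG-v_1}$ and $2z^2 IP_{SG-v_1-v_2}$.

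The repair is to partition not by the location of $e_1$ alone but by whether $e_1$ and $e_2$ lie on the \emph{same} side of $(A,A^c)$ or on \emph{opposite} sides. When they are on opposite sides, $e_1$ is genuinely isolated in whichever side contains it and can be dropped without changing $\varepsilon$; this gives a bijection with subsets of $E(B-e_1)$ and the term $IP_{SG-v_1}(z)$. When they are on the same side, say $\{e_1,e_2\}\subseteq A$, set $D=A\setminus\{e_1,e_2\}$; the correct local fact is $f(A)=f(D)$ (removing the interlaced pair $e_1,e_2$ \emph{together} leaves the face count unchanged, because $e_1$ straddles exactly one arc of $e_2$ and nothing else), whence $\varepsilon(A)=\varepsilon(D)+2$ via Lemma~\ref{J2}; the symmetry between ``both in $A$'' and ``both in $A^c$'' then supplies the factor $2$, giving $2z^{2}\,IP_{SG-v_1-v_2}(z)$. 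This is the decomposition the paper uses.
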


\begin{proof}
Let $B$ be a bouquet satisfying $SG=SI(B)$. We have $IP_{SG}(z)={^{\partial}}\varepsilon_{B}(z)$.
Note that $v_{1}, v_{2}$ correspond to two edges of $B$, we denote them by $e_{1}$ and $e_{2}$, respectively.
Since the degree of $v_{1}$ is 1 and the sign of $v_{1}$ is positive, it follows that $e_{1}$ is an untwisted loop and  for any $e\in E(B)-e_{1}-e_{2}$,
the ends of $e$ are therefore on $\alpha$ and $\beta$, or $\gamma$ and $\theta$ (otherwise it interlaces $e_{1}$) as shown in Figure \ref{f0009}.
\begin{figure}[!htbp]
\begin{center}
\includegraphics[width=12cm]{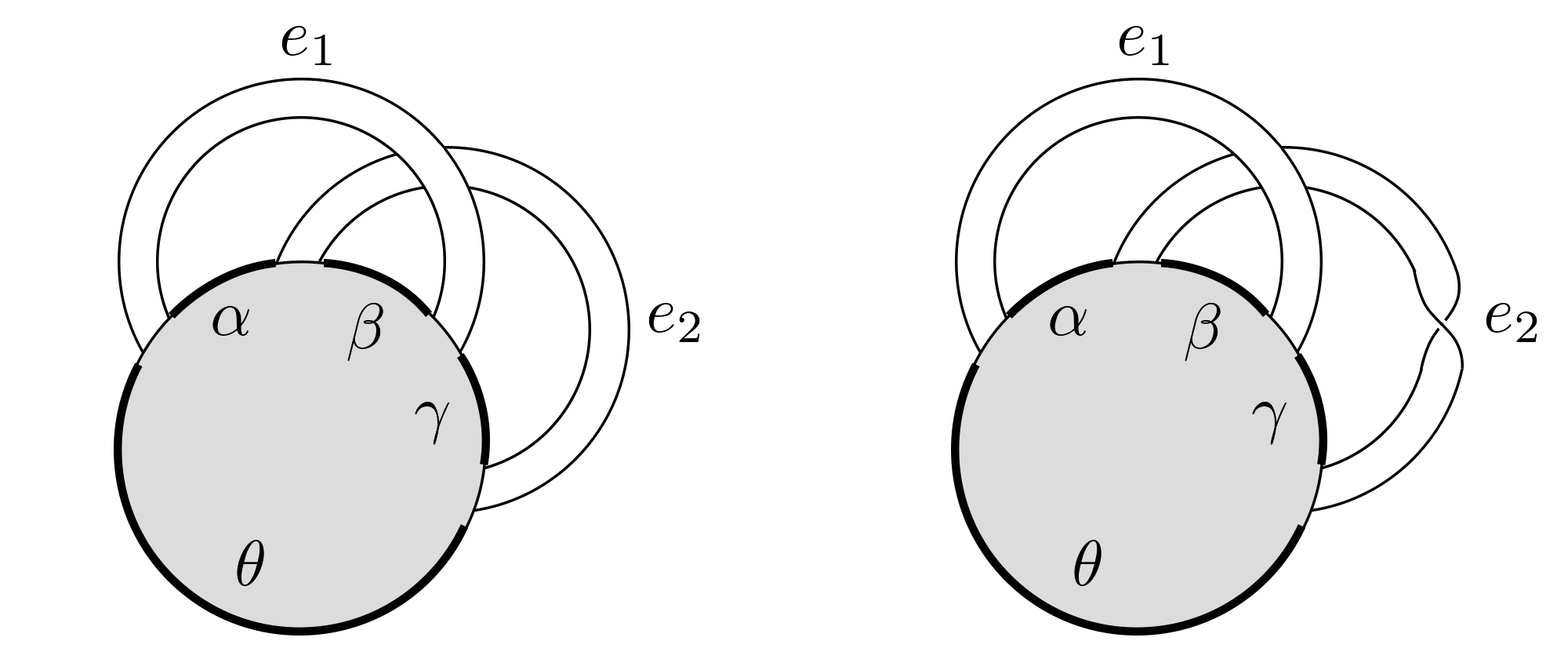}
\caption{Two cases for the bouquet $B$ in the proof of Theorem \ref{le-10}.}
\label{f0009}
\end{center}
\end{figure}
Let $A$ be any subset of $E(B)$. We partition the possible subsets $A$ into two types:
\begin{enumerate}
  \item $\tau_{1}$: those for which one of $e_{1}, e_{2}$ is in $A$ and the other is in $A^{c}$;
  \item $\tau_{2}$: those for which $e_{1}, e_{2}$ are both in $A$ or both in $A^{c}$.
\end{enumerate}
Then
$${^{\partial}}\varepsilon_{B}(z)=\Sigma_{A\in \tau_{1}}z^{\varepsilon(B^{A})}+\Sigma_{A\in \tau_{2}}z^{\varepsilon(B^{A})}.$$

Let $D\subseteq E(B-e_1)$. Then $D^c=B-e_1-D$. If $e_2\in D$, take $A=D$ and $A^c=D^c\cup e_1$; if $e_2\notin D$, take  $A=D\cup e_1$ and $A^c=D^c$. A 1-1 correspondence between the set of subsets of $E(B-e_1)$ and $\tau_{1}$ is established. Furthermore, it is not difficult to see that $\varepsilon(D)=\varepsilon(A)$ and $\varepsilon(A^{c})=\varepsilon(D^{c})$ for each case. By Lemma \ref{le-01} we have 
\begin{eqnarray*}
\Sigma_{A\in \tau_{1}}z^{\varepsilon(B^{A})}={^{\partial}}\varepsilon_{B-e_{1}}(z).
\end{eqnarray*}

Let $D\subseteq E(B-e_{1}-e_{2})$. Then $D^c=B-e_1-e_2-D$. Take $A=D\cup \{e_1,e_2\}$ and $A^c=D^c$. Clearly $\varepsilon(A^{c})=\varepsilon(D^{c})$ and it is not difficult to see that $f(A)=f(D)$, hence  $\varepsilon(A)=\varepsilon(D)+2$. Thus, we have

\begin{eqnarray*}
\Sigma_{A\in \tau_{2}}z^{\varepsilon(B^{A})}&=&2\Sigma_{\{e_1,e_2\}\subseteq A\in \tau_{2}}z^{\varepsilon(B^{A})}\\
&=&(2z^{2})~{^{\partial}}\varepsilon_{B-e_{1}-e_{2}}(z).
\end{eqnarray*}
 
Hence, $
{^{\partial}}\varepsilon_{B}(z)={^{\partial}}\varepsilon_{B-e_{1}}(z)+(2z^{2})~{^{\partial}}\varepsilon_{B-e_{1}-e_{2}}(z), 
$
i.e. $IP_{SG}(z)=IP_{SG-v_{1}}(z)+(2z^{2})IP_{SG-v_{1}-v_{2}}(z)$.

\end{proof}

\begin{example}
Let $S_{n}$ be a positive star which is a complete bipartite graph whose vertex set can be partitioned into two subsets $X$ and $Y$
so that every edge has one end in $X$ and the other end in $Y$ with $|X|=1$ and $|Y|=n$. Then 
we have initial condition $IP_{S_{1}}(z)=2+2z^{2}$, and by Theorem \ref{le-10}, the recursion
$$IP_{S_{n+1}}(z)=IP_{S_{n}}(z)+2^{n+1}z^{2}.$$
Then it is easy to obtain that 
\begin{eqnarray}
IP_{S_{n}}(z)=(2^{n+1}-2)z^{2}+2.
\end{eqnarray}
\end{example}

\begin{example}
Let $P_{n}$ be a positive path with $n$ vertices. Then 

$IP_{P_{1}}(z)=2;$

$IP_{P_{2}}(z)=2+2z^{2};$

$IP_{P_{n+2}}(z)=IP_{P_{n+1}}(z)+2z^{2}IP_{P_{n}}(z).$

\end{example}

\begin{theorem}
Let $SG$ be a signed intersection graph. Then $IP_{SG}(z)$ contains non-zero constant term if and only if $SG$ is positive and bipartite.
\end{theorem}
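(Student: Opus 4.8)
The plan is to translate the statement about the intersection polynomial $IP_{SG}(z)$ into a statement about the Euler genus $\varepsilon(B^A)$ for a bouquet $B$ with $SI(B)=SG$. Since $IP_{SG}(z)={^{\partial}}\varepsilon_B(z)=\sum_{A\subseteq E(B)}z^{\varepsilon(B^A)}$, having a non-zero constant term is equivalent to the existence of some $A\subseteq E(B)$ with $\varepsilon(B^A)=0$, i.e. $B^A$ is a plane ribbon graph (equivalently, $\varepsilon(A)=\varepsilon(A^c)=0$ by Lemma \ref{le-01}). So the theorem reduces to: \emph{there exists a bipartition-like splitting $E(B)=A\sqcup A^c$ with both $SG[A]$ and $SG[A^c]$ realizable by plane bouquets} if and only if $SG$ is positive and bipartite. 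By Lemma \ref{j1}, $\varepsilon(B)=0$ forces $B$ orientable, hence each of $A$, $A^c$ must induce a positive subgraph; and a genus-zero orientable bouquet is one whose chords can be drawn without crossings, i.e. its (unsigned) intersection graph is edgeless. So $\varepsilon(A)=0 \iff A$ induces a positive independent set in $SG$, and likewise for $A^c$.

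With that reduction in hand, the equivalence becomes purely graph-theoretic: $SG$ has a vertex-partition into two positive independent sets if and only if $SG$ is positive and bipartite. The forward direction is immediate: if such a partition exists then every vertex lies in a positive part, so $SG$ is positive, and the two independent sets are a proper $2$-coloring, so $SG$ is bipartite. The reverse direction is equally immediate: if $SG$ is bipartite take the two color classes $X,Y$; each is independent, and since $SG$ is positive each is a positive independent set; set $A=X$, $A^c=Y$. Then $\varepsilon(A)=\varepsilon(A^c)=0$, so $\varepsilon(B^A)=0$ by Lemma \ref{le-01}, giving a non-zero constant term.

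The steps, in order, would be: (1) record that a non-zero constant term of $IP_{SG}(z)$ is equivalent to $\exists A$ with $\varepsilon(B^A)=0$; (2) using Lemma \ref{le-01}, rewrite this as $\exists A$ with $\varepsilon(A)=\varepsilon(A^c)=0$; (3) prove the key claim that for $S\subseteq V(SG)$ one has $\varepsilon(S)=0$ iff $S$ induces a positive edgeless subgraph of $SG$ — here one uses Lemma \ref{J2} ($\varepsilon(B)=1+e(B)-f(B)$) together with the fact that a bouquet all of whose loops are untwisted and pairwise non-interlaced is exactly a plane bouquet (equivalently, invoke Lemma \ref{j1} to rule out $-$ signs, then note that a positive non-interlaced loop can be "absorbed" without changing that it contributes a fresh boundary component, so $f=e+1$); (4) combine (2) and (3) to get: non-zero constant term $\iff$ $V(SG)$ admits a partition into two positive independent sets; (5) finish with the trivial two-line graph-theory argument that this partition condition is exactly "$SG$ positive and bipartite."

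The main obstacle is step (3), specifically the claim that $\varepsilon(S)=0$ forces the induced subgraph on $S$ to be edgeless — i.e. that a plane bouquet has no two interlaced loops. The "only if" direction (independent positive set $\Rightarrow$ genus $0$) is a short induction on $|S|$: adding a positive loop that interlaces nothing splits one boundary component into two, so $f$ increases by $1$ as $e$ increases by $1$, keeping $\varepsilon=1+e-f=0$. The "if" direction needs the converse: if some pair of loops in $SI(B)[S]$ is adjacent, i.e. interlaced, then reading the four endpoints in cyclic order $xyxy$ shows the corresponding chords cross, so already the $2$-edge sub-bouquet has $f=1$ hence $\varepsilon=2>0$; and Euler genus is monotone under taking sub-bouquets (deleting an edge of a ribbon graph cannot increase Euler genus), so $\varepsilon(S)>0$. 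One should state this monotonicity explicitly (it is standard for ribbon graphs) since it is the one external fact not recorded in the excerpt; everything else is bookkeeping with Lemmas \ref{J2}, \ref{j1}, and \ref{le-01}.
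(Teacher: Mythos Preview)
Your proposal is correct and follows essentially the same approach as the paper: both reduce via Lemma~\ref{le-01} to finding $A$ with $\varepsilon(A)=\varepsilon(A^c)=0$, invoke monotonicity of Euler genus under edge deletion together with the fact that a single interlaced (positive) pair already has positive genus, and identify the genus-zero sub-bouquets with the positive independent sets of $SG$. The only cosmetic difference is that the paper argues bipartiteness by contradiction through an odd cycle, whereas you state the characterization $\varepsilon(S)=0\iff S$ is a positive independent set explicitly and then read off ``partition into two independent sets $=$ bipartite'' directly; the underlying ingredients are identical.
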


\begin{proof}
Let $B$ be a bouquet satisfying $SG=SI(B)$. We know $IP_{SG}(z)={^{\partial}}\varepsilon_{B}(z)$. Since $IP_{SG}(z)$ contains non-zero constant term,  it follows that $B$ is a partial dual of a plane ribbon graph. According to  the property that partial duality preserving orientability,  we have $B$ is orientable and hence $SG$ is positive. Suppose that $SG$ is not bipartite. Then $SG$ contains an odd cycle $C$. We denote by $D$ the edge subset of $B$ corresponding to vertices of $C$. It is obvious that deleting edges can not increase Euler genus. Then for any subset $A$ of $E(B)$, we have $\varepsilon(A\cap D)\leqslant \varepsilon(A)$, $\varepsilon(A^{c}\cap D)\leqslant \varepsilon(A^{c})$. Note that there are two loops $e, f\in A\cap D$ or  $e, f\in A^{c}\cap D$ such that
 their ends are met in the cyclic order $e\cdots f\cdots e\cdots f\cdots$ when traveling round the boundary of the unique vertex of $B$. Then $\varepsilon(A\cap D)+\varepsilon(A^{c}\cap D)>0$. Thus $\varepsilon(B^{A})=\varepsilon(A)+\varepsilon(A^{c})>0$, a contradiction.

Conversely, if $SG$ is bipartite and non-trivial, then its vertex set can be partitioned into two subsets $X$ and $Y$ so that
every edge of $SG$ has one end in $X$ and the other end in $Y$. For these two subsets $X$ and $Y$ of the vertex set of $SG$,
we denoted these two corresponding edge subsets of $B$ also by $X$ and $Y$. Obviously, $X\cup Y=E(B), X\cap Y=\emptyset$ and $\varepsilon(X)=\varepsilon(Y)=0$. Thus $\varepsilon(B^{X})=0$ by Lemma \ref{le-01}. Hence
 ${^{\partial}}\varepsilon_{B}(z)$ (hence, $IP_{SG}(z)$) contains non-zero constant term.
\end{proof}

\section{Second Main Theorem}
\noindent

Gross, Mansour and Tucker \cite{GMT} discussed the simplest partial-dual genus polynomial, i.e., a constant polynomial and found examples of non-orientable ribbon graphs whose polynomials have only one non-constant term.
They proved:
\begin{proposition} \cite{GMT}
Let $G$ be a connected ribbon graph. Then ${^{\partial}}\varepsilon_{G}(z)=2^{e(G)}$ if and only if there is a subset $A\subseteq E(G)$ such that $G^{A}$ is a tree.
\end{proposition}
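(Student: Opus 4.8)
The plan is to reduce to bouquets and then combine Lemma~\ref{le-01} with a description of the genus‑zero bouquets in terms of their signed intersection graphs. Every connected ribbon graph $G$ has a partial dual $B=G^{A_{0}}$ that is a bouquet, and since $(G^{A_{0}})^{A_{0}\triangle C}=G^{C}$ the partial duals of $G$ and of $B$ form the same family; hence $^{\partial}\varepsilon_{G}(z)={^{\partial}}\varepsilon_{B}(z)$, $e(G)=e(B)$, and $G$ has a partial dual that is a tree if and only if $B$ does. So it suffices to prove the statement for a bouquet $B$. Now $^{\partial}\varepsilon_{B}(z)=\sum_{A\subseteq E(B)}z^{\varepsilon(B^{A})}$ is a sum of $2^{e(B)}$ monomials with non‑negative exponents, so $^{\partial}\varepsilon_{B}(z)=2^{e(B)}$ exactly when $\varepsilon(B^{A})=0$ for all $A$; and by Lemma~\ref{le-01}, $\varepsilon(B^{A})=\varepsilon(A)+\varepsilon(A^{c})$ with both terms non‑negative, so this is equivalent to $\varepsilon(A)=0$ for every $A\subseteq E(B)$, where $\varepsilon(A)$ denotes the Euler genus of the sub‑bouquet on the edge set $A$.

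The heart of the argument is the following chain of equivalences: for a bouquet $B$ one has $\varepsilon(A)=0$ for all $A\subseteq E(B)$ if and only if $SI(B)$ is positive with no edges, if and only if $f(B)=e(B)+1$, if and only if $B^{E(B)}$ is a tree. Indeed, taking $A=\{e\}$ forces each loop to be untwisted and $A=\{e,f\}$ forces no two loops to be interlaced; conversely if $SI(B)$ is positive and edgeless then so is $SI$ of every sub‑bouquet (it is an induced subgraph), and a bouquet all of whose loops are untwisted and pairwise non‑interlaced has $f=e+1$ boundary components, hence Euler genus $0$ by Lemma~\ref{J2}. Finally $B^{E(B)}$ is the geometric dual, a connected ribbon graph with $f(B)$ vertices and $e(B)$ edges, so it is a tree precisely when $f(B)=e(B)+1$. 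This already settles the ``only if'' direction: if $^{\partial}\varepsilon_{B}(z)=2^{e(B)}$ then $A=E(B)$ works.

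For the ``if'' direction, suppose $B^{A}=T$ is a tree for some $A$. Writing $B=T^{A}$ and $B^{C}=T^{A\triangle C}$, it is enough to show $\varepsilon(T^{D})=0$ for every $D\subseteq E(T)$. A tree is plane with a single boundary component, so $B_{0}:=T^{E(T)}$ is a bouquet (its number of vertices is $f(T)=1$) and it is plane, being the geometric dual of a plane connected graph. Applying the chain of equivalences above to $B_{0}$ gives $\varepsilon(E)=0$ for every $E\subseteq E(B_{0})=E(T)$, and Lemma~\ref{le-01} then yields $\varepsilon(B_{0}^{E})=0$ for all $E$. As $E$ ranges over all subsets of $E(T)$, $B_{0}^{E}=T^{E(T)\triangle E}$ ranges over all partial duals of $T$; hence $\varepsilon(T^{D})=0$ for all $D$, and therefore $^{\partial}\varepsilon_{B}(z)=2^{e(B)}$.

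The step I expect to be the main obstacle is the middle one: checking that a bouquet whose loops are all untwisted and pairwise non‑interlaced has exactly $e(B)+1$ boundary components (equivalently, is plane), and matching this cleanly with the statement ``$B^{E(B)}$ is a tree''. Everything else is bookkeeping with symmetric differences of edge sets, which is routine once Lemma~\ref{le-01} and Lemma~\ref{J2} are in hand.
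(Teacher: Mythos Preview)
The paper does not supply its own proof of this proposition: it is quoted from \cite{GMT} and used as background for Section~6. So there is no in-paper argument to compare against.

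That said, your argument is correct and self-contained within the framework of the present paper. The reduction to bouquets via a spanning-tree partial dual is standard, and your use of Lemma~\ref{le-01} to convert ``$\varepsilon(B^{A})=0$ for all $A$'' into ``$\varepsilon(A)=0$ for all sub-bouquets $A$'' is clean. The core combinatorial step---that a plane bouquet is exactly one whose loops are all untwisted and pairwise non-interlaced, equivalently $f(B)=e(B)+1$---is straightforward: one direction follows by induction on $e(B)$ (an untwisted non-interlacing loop sits inside a single boundary arc and splits that face in two), and the other because a twisted loop or an interlaced untwisted pair already forces positive Euler genus, which cannot decrease under edge deletion. The identification of $B^{E(B)}$ with the geometric dual, and the observation that a connected ribbon graph with $f(B)$ vertices and $e(B)$ edges is a tree precisely when $f(B)=e(B)+1$, are both immediate. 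Your ``if'' direction, passing from an arbitrary tree $T$ to the plane bouquet $B_{0}=T^{E(T)}$ and then invoking the same equivalence chain, is a nice way to close the loop.

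One minor remark: in the chain of equivalences you use the implication $f(B_{0})=e(B_{0})+1\Rightarrow SI(B_{0})$ is positive and edgeless, which you did not state explicitly in the forward direction. It is, however, the easy direction (orientability is forced, and an interlaced pair would push the genus up), so this is not a gap---just worth one sentence when you write it out in full.
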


and

\begin{proposition} \cite{GMT}
For any $n>0$ and any $m\geq n$, there is a non-orientable ribbon graph $G$ such that ${^{\partial}}\varepsilon_{G}(z)=2^{m}z^{n}$.
\end{proposition}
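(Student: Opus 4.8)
## Proof proposal for Proposition (non-orientable ribbon graphs with ${^\partial}\varepsilon_G(z)=2^m z^n$)

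The plan is to exhibit an explicit family of bouquets (which suffices, since every connected ribbon graph has a bouquet among its partial duals with the same partial-dual genus polynomial) whose partial-dual Euler genus polynomial is a single monomial $2^m z^n$. The natural candidate, given the recursions developed in Section~5, is a bouquet built entirely from twisted loops together with enough "free" twisted loops to inflate the coefficient. Concretely, I would first treat the base case $m=n$: take the bouquet $B$ whose signed rotation consists of $n$ pairwise-interlaced twisted loops, i.e. the bouquet whose signed intersection graph is the complete graph $K_n$ with every vertex signed $-$. One then shows directly that $\varepsilon(B^A)=n$ for \emph{every} $A\subseteq E(B)$: indeed, by Lemma~\ref{le-01}, $\varepsilon(B^A)=\varepsilon(A)+\varepsilon(A^c)$, and the induced sub-bouquet on any subset of $k$ mutually interlaced twisted loops is a non-orientable bouquet with a single boundary component, so $\varepsilon$ of it is exactly $k$ by Lemma~\ref{J2}; hence $\varepsilon(A)+\varepsilon(A^c)=|A|+|A^c|=n$. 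Therefore ${^\partial}\varepsilon_B(z)=\sum_{A\subseteq E(B)}z^n=2^n z^n$.

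For the general case $m\ge n$, I would adjoin $m-n$ extra twisted loops that are \emph{not interlaced with anything}, i.e. take the bouquet $B'$ with signed intersection graph equal to $K_n$ (all vertices signed $-$) together with $m-n$ isolated vertices signed $-$. Each such isolated twisted loop $e$ contributes a factor in the following sense: by an argument parallel to the $\tau_1/\tau_2$ bookkeeping in the proof of Theorem~\ref{le-10}, for a free loop $e$ one splits subsets according to whether $e\in A$ or $e\in A^c$; in either case the loop contributes $+1$ to the Euler genus of whichever of $A,A^c$ contains it (a single twisted loop is a Möbius bouquet with $\varepsilon=1$), and the two choices are in bijection, so ${^\partial}\varepsilon_{B'}(z)=2z\cdot{^\partial}\varepsilon_{B'-e}(z)$. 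Iterating this $m-n$ times gives ${^\partial}\varepsilon_{B'}(z)=(2z)^{m-n}\cdot 2^n z^n=2^m z^m\cdot z^{n-m}$ — wait, I must be careful here: the exponent bookkeeping must be redone so that the final exponent is $n$, not $m$; the point is that each free \emph{twisted} loop raises both the coefficient by a factor $2$ and the exponent by $1$, which would overshoot. The fix is to use free \emph{untwisted} loops instead for the coefficient inflation: a free untwisted loop $e$ satisfies $\varepsilon(\{e\})=0$, so it contributes a factor $2$ to the coefficient and $0$ to the exponent, giving ${^\partial}\varepsilon_{B'}(z)=2^{m-n}\cdot 2^n z^n=2^m z^n$. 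So the correct family is: $K_n$ worth of mutually interlaced twisted loops, plus $m-n$ free untwisted loops.

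The main technical obstacle I anticipate is verifying cleanly that the bouquet on $n$ mutually interlaced twisted loops has $\varepsilon=n$ (equivalently $f=1$) \emph{and} that the same holds for every induced sub-bouquet, so that Lemma~\ref{le-01} delivers the constant value $n$ on the nose; this is a boundary-tracing computation on the signed rotation $(a_1,a_2,\dots,a_n,-a_1,-a_2,\dots,-a_n)$ (or whichever interlacing pattern realizes $K_n$ with all $-$ signs), and one should check that this signed rotation is indeed realizable as a genuine bouquet and that its signed intersection graph is the all-negative $K_n$. A secondary, smaller point is confirming non-orientability of the whole construction: by Lemma~\ref{j1} the bouquet is orientable iff its signed intersection graph is positive, and ours has negative-signed vertices (the $K_n$ part) as soon as $n\ge 1$, so $G$ is non-orientable, as required. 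Finally, one notes the hypothesis $n>0$ is exactly what guarantees at least one twisted loop is present, hence both the exponent $z^n$ is genuinely nonzero and the graph is genuinely non-orientable.
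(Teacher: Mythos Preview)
The paper itself does not prove this proposition; it is merely cited from \cite{GMT}. So there is no in-paper proof to compare against, and I will evaluate your argument on its own terms.

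Your base case is incorrect. You take the bouquet whose signed intersection graph is the all-negative $K_n$ and claim that every induced sub-bouquet on $k$ edges has Euler genus $k$ (equivalently, one boundary component). This already fails for $k=2$: the bouquet $(1,2,-1,-2)$, whose signed intersection graph is $K_2$ with both vertices negative, has ${^\partial}\varepsilon(z)=2z+2z^{2}$ (this is precisely the $B_2$ of Remark~\ref{rem1}), so $\varepsilon(1,2,-1,-2)=1$, not $2$. In terms of your Lemma~\ref{le-01} bookkeeping, for $A=\emptyset$ one gets $\varepsilon(A)+\varepsilon(A^c)=0+1=1$, whereas for $A=\{1\}$ one gets $1+1=2$; the values are not constant and the polynomial is not a monomial. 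Hence the all-negative $K_n$ bouquet does \emph{not} give ${^\partial}\varepsilon(z)=2^n z^n$, and your ``main technical obstacle'' is in fact an obstruction.

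The repair is easy and is in the spirit of the second half of your proposal: replace the mutually interlaced twisted loops by \emph{non}-interlaced twisted loops. Take the ribbon-join $B=\underbrace{B_{\overline{1}}\vee\cdots\vee B_{\overline{1}}}_{n}\ \vee\ \underbrace{B_{1}\vee\cdots\vee B_{1}}_{m-n}$, i.e.\ $n$ pairwise non-interlaced twisted loops together with $m-n$ pairwise non-interlaced untwisted loops. By Lemma~\ref{le-08} the partial-dual Euler genus polynomial is multiplicative over ribbon-joins, so
\[
{^\partial}\varepsilon_{B}(z)=(2z)^{n}\cdot 2^{\,m-n}=2^{m}z^{n}.
\]
Since $n\geq 1$ there is at least one twisted loop, so by Lemma~\ref{j1} the bouquet is non-orientable. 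This is also consistent with the characterization in Theorem~\ref{main2}: every prime factor lies in $\mathcal{B}$, $e(B)=m$, $k=m$, $k_2=n$, and indeed $b=e(B)-k+k_2=n$.
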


For orientable ribbon graphs, Gross, Mansour and Tucker posed Conjecture \ref{con-01} and we found an infinite family of counterexamples in \cite{QYJ}. Let $B_{t}$ be a bouquet with the signed rotation $(1, 2, 3, \cdots, t, 1, 2, 3, \cdots, t)$.

\begin{proposition}\label{le-06}\cite{QYJ}
Let $t$ be a positive integer. Then
\begin{eqnarray*}
^{\partial}\varepsilon_{B_{t}}(z)=\left\{\begin{array}{ll}
                    2^{t}z^{t-1}, & \mbox{if}~t~\mbox{is odd,}\\
                    2^{t-1}z^{t}+2^{t-1}z^{t-2}, & \mbox{if}~t~\mbox{is even.}
                   \end{array}\right.
\end{eqnarray*}
\end{proposition}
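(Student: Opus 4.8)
The statement to prove is Proposition~\ref{le-06}: for $B_t$ with signed rotation $(1,2,\dots,t,1,2,\dots,t)$, the partial-dual Euler genus polynomial equals $2^t z^{t-1}$ when $t$ is odd and $2^{t-1}z^t + 2^{t-1}z^{t-2}$ when $t$ is even.

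\medskip

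\textbf{Proof proposal.} The plan is to compute $^{\partial}\varepsilon_{B_t}(z)$ directly from the formula $\varepsilon(B_t^A) = \varepsilon(A) + \varepsilon(A^c)$ of Lemma~\ref{le-01}, by understanding $\varepsilon(A)$ for every edge subset $A$. First I would observe that the signed intersection graph $SI(B_t)$ is the complete graph $K_t$ with all vertices signed $+$: every pair of edges $i,j$ appears in the cyclic order $i\cdots j\cdots i\cdots j$, and every edge is an untwisted loop. By Theorem~\ref{main-1} (or rather its consequence that $\varepsilon(A)$ depends only on the induced signed intersection graph), $\varepsilon(A)$ depends only on $|A|$; so let $g(k) := \varepsilon(A)$ for $|A| = k$. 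The bouquet on $k$ edges with rotation $(1,\dots,k,1,\dots,k)$ is exactly $B_k$, so $g(k) = \varepsilon(B_k)$, and by Lemma~\ref{J2} this is $1 + k - f(B_k)$ where $f(B_k)$ is the number of boundary components.

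\medskip

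The key combinatorial computation is therefore $f(B_k)$: the number of boundary components of the bouquet whose rotation is $(1,2,\dots,k,1,2,\dots,k)$. I would trace the boundary walk explicitly. This configuration is the standard ``doubled necklace'' whose surface is well known: one finds $f(B_k) = 2$ if $k$ is even and $f(B_k) = 1$ if $k$ is odd. (Concretely, following the boundary from a fixed starting arc, one advances by a fixed step around the $2k$ half-edge positions, closing up after one pass if $k$ is odd and after two passes if $k$ is even; I would verify this by the standard arrow-tracing argument, checking the base cases $k=1,2,3$ and the parity pattern.) Hence
\begin{eqnarray*}
\varepsilon(B_k) = g(k) = \left\{\begin{array}{ll} k, & k \text{ odd,}\\ k-1, & k \text{ even.}\end{array}\right.
\end{eqnarray*}
In particular $g(k) = k$ or $k-1$, always ``as large as possible'' of the right parity; note $g(0)=0$.

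\medskip

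With $g$ in hand I would finish by summing over all $2^t$ subsets $A$, grouping by $j := |A|$ (so $|A^c| = t-j$, and there are $\binom{t}{j}$ such $A$), giving
$$^{\partial}\varepsilon_{B_t}(z) = \sum_{j=0}^{t} \binom{t}{j} z^{g(j) + g(t-j)}.$$
When $t$ is odd, exactly one of $j, t-j$ is odd and the other even, so $g(j) + g(t-j) = j + (t-j-1) = t-1$ for every $j$; the exponent is constant and the sum is $\left(\sum_j \binom{t}{j}\right) z^{t-1} = 2^t z^{t-1}$. When $t$ is even, $j$ and $t-j$ have the same parity: if both are even then $g(j)+g(t-j) = (j-1)+(t-j-1) = t-2$ (using $g(0)=0$ gives $t-1$ only when one of them is $0$, so I must treat $j\in\{0,t\}$ separately — there $g(0)+g(t) = 0 + (t-1) = t-1$); if both are odd then $g(j)+g(t-j) = j + (t-j) = t$. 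Collecting terms: the odd-$j$ subsets contribute $\left(\sum_{j \text{ odd}} \binom{t}{j}\right) z^t = 2^{t-1} z^t$, and the even-$j$ subsets contribute $2^{t-1} z^{t-2}$ as well once the two ``boundary'' subsets $j=0,t$ (total weight $2$, exponent $t-1$) are reconciled — in fact a cleaner route is to note $g(0)+g(t) = t-1 = g(t-2)\cdot$no; so here I would instead use the identity $g(j)+g(t-j) = t - 2 + 2\,[\,j \in \{0,t\}\,]$ for even $j$ is wrong, and the right bookkeeping is: even-$j$ with $0<j<t$ contribute exponent $t-2$, and $j\in\{0,t\}$ contribute exponent $t-1$; summing $\binom{t}{0}+\binom{t}{t} = 2$ at exponent $t-1$ and $\sum_{j \text{ even}, 0<j<t}\binom{t}{j} = 2^{t-1}-2$ at exponent $t-2$, plus $2^{t-1}$ at exponent $t$, yields $2^{t-1}z^t + (2^{t-1}-2)z^{t-2} + 2z^{t-1}$, which is \emph{not} the claimed answer — so the parity analysis of $g$ on small subsets must be redone more carefully: in particular I expect the true statement to be $g(k)=k-1$ for \emph{all} even $k$ including a correct value for the surface, and the discrepancy signals that $f(B_k)$ for small even $k$ needs rechecking, or that $A^c$ inside $B_t$ is not literally a copy of $B_{|A^c|}$ because the interlacement pattern of the complement could differ. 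The main obstacle, then, is precisely this: verifying that the induced sub-bouquet on any $k$-subset of edges of $B_t$ really is (equivalent to) $B_k$ — i.e. that the complete-graph structure of $SI(B_t)$ forces $\varepsilon(A)$ to depend only on $|A|$ with the stated values — and handling the boundary cases $j \in \{0, t\}$ correctly so the even-$t$ total collapses to exactly $2^{t-1}z^t + 2^{t-1}z^{t-2}$. I would resolve this by invoking Theorem~\ref{main-1}: since $SI(B_t)[A] = K_{|A|}$ with all-positive signs $= SI(B_{|A|})$, Lemma~\ref{le-04} gives $\varepsilon(A) = \varepsilon(B_{|A|})$ on the nose, and then only the single surface computation $f(B_k)$ (even: $2$; odd: $1$) and careful index bookkeeping remain.
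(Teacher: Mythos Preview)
The paper does not prove this proposition; it is cited from \cite{QYJ}. Your overall strategy --- use Lemma~\ref{le-01}, observe that $SI(B_t)=K_t^+$ so that $\varepsilon(A)$ depends only on $|A|$ via Lemma~\ref{le-04}, reduce to computing $\varepsilon(B_k)$, and sum over $j=|A|$ --- is correct and is essentially the natural direct argument.

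Your difficulty is not structural but arithmetical: you have the parity of $f(B_k)$ reversed. The bouquet $(1,2,\dots,k,1,2,\dots,k)$ has $f(B_k)=1$ when $k$ is even and $f(B_k)=2$ when $k$ is odd (check $k=2$: $(1,2,1,2)$ is the torus, with a single boundary component). Equivalently $\varepsilon(B_k)=2\lfloor k/2\rfloor$, i.e.\ $g(k)=k$ for even $k$ (including $k=0$) and $g(k)=k-1$ for odd $k$. With this correction the odd-$t$ case is unchanged, and in the even-$t$ case the two parities of $j$ give exponents $t$ (both even) and $t-2$ (both odd) with \emph{no} exceptional behaviour at $j\in\{0,t\}$, since $g(0)+g(t)=0+t=t$ already fits the pattern. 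The sum is then exactly $2^{t-1}z^t+2^{t-1}z^{t-2}$. The ``discrepancy'' you detected and your subsequent worry about whether the induced sub-bouquet is really $B_{|A|}$ are both artifacts of the swapped parity; once $g$ is corrected, the argument closes without further work.
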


Note that $B_3,B_5,B_7,\cdots$ is an infinite family of counterexamples to Conjecture \ref{con-01}. The purpose of this section is to characterize bouquets whose partial-dual genus polynomial has only one non-constant term.

\subsection{Prime bouquets and our result}
\noindent

Let $P, Q$ be two ribbon graphs, we denote by $P\vee Q$ the \emph{ribbon-join} of $P$ and $Q$. Note that in general the ribbon-join is not unique.  A ribbon graph is called \emph{empty} if it has no edges.  We say that $G$ is \emph{prime}, if there don't exist non-empty ribbon subgraphs $G_{1}, \cdots,  G_{k}$ of $G$ such that $G=G_{1}\vee\cdots \vee G_{k}$ where $k\geq 2$. 
Clearly, we have
\begin{lemma}
A bouquet $B$ is prime if and only if its intersection graph $I(B)$ is connected.
\end{lemma}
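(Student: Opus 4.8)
The plan is to pass from ribbon graphs to their signed rotations, where the ribbon-join becomes concatenation of cyclic words, and then recognize the statement as the classical fact that a chord diagram decomposes as a concatenation exactly when its circle graph is disconnected. The vertex signs of $SI(B)$ play no role, since both $I(B)$ and the notion of primality involve only the underlying unsigned chord diagram.

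First I would record the translation. A bouquet is faithfully encoded by its signed rotation, a cyclic word in which each edge label appears twice, and the ribbon-join $B_{1}\vee\cdots\vee B_{k}$ of bouquets has, up to a cyclic shift, the signed rotation $W_{1}W_{2}\cdots W_{k}$, where $W_{i}$ is the signed rotation of $B_{i}$ (this concatenation is exactly the source of the non-uniqueness of the ribbon-join), and each loop keeps its twisted/untwisted type. Since, for $e\in E(B_{i})$ and $f\in E(B_{j})$ with $i\neq j$, the four half-edges of $e$ and $f$ occur in the block pattern ``all of the $W_{i}$-part, then all of the $W_{j}$-part'', $e$ and $f$ are not interlaced; hence $E(B_{1}),\dots,E(B_{k})$ are unions of connected components of $I(B)$, and $I(B)$ is disconnected whenever $k\ge 2$. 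This already gives ``$B$ not prime $\Rightarrow I(B)$ disconnected''. For the converse, suppose $I(B)$ is disconnected and fix a connected component $\mathcal{C}$ with $\emptyset\neq\mathcal{C}\neq E(B)$; reading the half-edges around the vertex and marking each by whether its edge lies in $\mathcal{C}$, I get maximal blocks $X_{1}Z_{1}X_{2}Z_{2}\cdots X_{m}Z_{m}$ (the $X$'s carrying $\mathcal{C}$-half-edges, the $Z$'s the rest). If $m=1$, cutting the boundary at the two ends of $X_{1}$ writes $B=B'\vee B''$ with $E(B')=\mathcal{C}$ and $E(B'')=E(B)\setminus\mathcal{C}$, both non-empty, so $B$ is not prime. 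If $m\ge 2$, the key observation is that no edge $f\in E(B)\setminus\mathcal{C}$ can have its two half-edges in two different blocks $Z_{i},Z_{j}$: such an $f$ would cut the boundary into two arcs, each containing at least one full block $X_{\ell}$ (here $m\ge 2$ is used), and since no $\mathcal{C}$-edge may interlace $f$ (otherwise $f$ lies in the component $\mathcal{C}$), every $\mathcal{C}$-edge would lie entirely inside one of these two arcs, exhibiting $I(B)[\mathcal{C}]$ as disconnected and contradicting the choice of $\mathcal{C}$. Therefore every edge of $E(B)\setminus\mathcal{C}$ has both half-edges inside a single block $Z_{i}$, so cutting the boundary at the two ends of $Z_{1}$ splits off a non-empty sub-bouquet and again shows $B$ is not prime.

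The easy half is immediate once the ``ribbon-join $=$ concatenation of signed rotations'' dictionary is in place. The one genuine step is the converse in the case $m\ge 2$, i.e.\ when the half-edges of the chosen component $\mathcal{C}$ are scattered around the vertex rather than contiguous; the obstacle is to see that the ``interruptions'' $Z_{i}$ are themselves self-contained sub-diagrams, which is exactly the no-straddling argument above and amounts to the standard description of the prime factors of a chord diagram under concatenation as the connected components of its circle graph. One could alternatively quote this last fact from the circle-graph literature and simply transport it through the dictionary of the second paragraph.
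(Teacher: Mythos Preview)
Your argument is correct. The paper itself offers no proof of this lemma: it is stated immediately after the definition of a prime bouquet, prefaced only by ``Clearly, we have''. Your proposal therefore supplies the details that the authors leave implicit. The route you take---translating ribbon-join into concatenation of cyclic words, and then running the ``no-straddling'' argument to show that the complement of a connected component of the circle graph must sit inside a single contiguous arc between two $\mathcal{C}$-blocks---is the standard justification of this folklore fact about chord diagrams and their circle graphs, and it is executed cleanly here. The one point worth making explicit in the $m\ge 2$ case is that after cutting at the ends of $Z_{1}$ the remaining piece is itself a bouquet (every $\mathcal{C}$-edge lives in the $X$-blocks and every other remaining edge lives in some $Z_{i}$ with $i\neq 1$), but this is immediate from what you have already shown.
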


Let $B_{\overline{1}}=(1, -1)$ be the non-orientable bouquet with only one edge. Let $\mathcal{B}=\{B_{\overline{1}}, B_1, B_3, B_5,\cdots\}$. Now we are in a position to state our second main theorem as follows.

\begin{theorem}\label{main2}
Let $B$ be a non-empty bouquet. Then 

$$
{^{\partial}}\varepsilon_{B}(z)=2^{e(B)}z^{b}\Longleftrightarrow B=B_{t_{1}}\vee\cdots \vee B_{t_{k}}, 
$$
where $k\geq 1$ and $B_{t_{i}}\in \mathcal{B}$ for $1\leqslant i \leqslant k$. Furthermore, if the number of the prime factors $B_{\overline{1}}$ in $B$ is $k_2$, then $b=e(B)-k+k_2$.
\end{theorem}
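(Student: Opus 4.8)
The plan is to prove both directions separately, leaning heavily on Lemma~\ref{le-01} (additivity of Euler genus under partial duality), the fact that partial duality preserves orientability, and the behaviour of the partial-dual genus polynomial under ribbon-join. First I would record the multiplicativity lemma: if $B=B_1\vee B_2$ then $^{\partial}\varepsilon_{B}(z)={^{\partial}}\varepsilon_{B_1}(z)\cdot{^{\partial}}\varepsilon_{B_2}(z)$. This follows because edges in different join-factors are never interlaced, so for $A\subseteq E(B)$ writing $A=A_1\sqcup A_2$ with $A_i\subseteq E(B_i)$ we get $\varepsilon(B^A)=\varepsilon(B_1^{A_1})+\varepsilon(B_2^{A_2})$ via Lemma~\ref{le-01} applied inside each factor (the induced signed intersection graph of $B$ on any vertex subset splits as the disjoint union of the induced subgraphs on the two factors, so Euler genera add). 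With this in hand the $\Longleftarrow$ direction is immediate: using Proposition~\ref{le-06} and $^{\partial}\varepsilon_{B_{\overline 1}}(z)=2z$ (a single twisted loop has Euler genus $1$, and its only other partial dual is itself), every factor in $\mathcal B$ has a one-term monomial polynomial $2^{e(B_{t_i})}z^{b_i}$, hence the product is $2^{\sum e(B_{t_i})}z^{\sum b_i}=2^{e(B)}z^{b}$; summing the exponents, $B_1$ contributes $0$ to $b$, $B_{\overline 1}$ contributes $1$, and $B_{2j+1}$ with $j\ge1$ contributes $2j=e(B_{t_i})-1$, which gives exactly $b=e(B)-k+k_2$ once one checks that $B_1$ (with $e=1$, $b=0$) also fits the formula "$e(B_{t_i})-1$" and is not counted in $k_2$.

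For the $\Longrightarrow$ direction, suppose $^{\partial}\varepsilon_B(z)=2^{e(B)}z^b$. First decompose $B=B^{(1)}\vee\cdots\vee B^{(k)}$ into prime factors (equivalently, take the connected components of $I(B)$); by multiplicativity each $^{\partial}\varepsilon_{B^{(i)}}(z)$ is a single monomial with coefficient $2^{e(B^{(i)})}$, so it suffices to classify the \emph{prime} non-empty bouquets whose polynomial is $2^{e}z^{b}$. Here I split into the non-orientable and orientable cases as the paper announces. In the non-orientable prime case, by the result quoted just before the conjecture (a prime non-orientable bouquet has a one-non-constant-term polynomial iff its intersection graph is trivial, i.e.\ $e=1$), the only possibility is $B=B_{\overline 1}$. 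In the orientable prime case, the signed intersection graph is positive (Lemma~\ref{j1}), and I would invoke the paper's stated result that Conjecture~\ref{con-01} holds for all prime orientable bouquets except the family $B_3,B_5,B_7,\dots$; combined with the trivial case $B_1$ (whose polynomial is $2z^0$, a single \emph{constant} term), this forces $B^{(i)}\in\{B_1,B_3,B_5,\dots\}$. Intersecting the two cases, every prime factor lies in $\mathcal B$, which is the claim; the exponent count for $b$ is then re-derived exactly as in the converse direction.

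The genuine content — and the main obstacle — is the classification of prime orientable bouquets with a single non-constant term, i.e.\ establishing that $B_3,B_5,B_7,\dots$ are the only orientable counterexamples to Conjecture~\ref{con-01} among prime bouquets. That statement is asserted in the introduction but its proof is the real work of this section; everything else here (multiplicativity under ribbon-join, the non-orientable case, bookkeeping of exponents) is routine. So in practice I would structure the section as: (i) the ribbon-join multiplicativity lemma with its short proof; (ii) reduction to prime bouquets; (iii) the non-orientable prime classification via the previously-established trivial-intersection-graph criterion; (iv) the orientable prime classification, which is where the heavy lifting lives and which I would expect to require a careful analysis — probably via a minimal-counterexample argument on $e(B)$, using Lemma~\ref{le-01} to split off a low-degree vertex of $SI(B)$ and a recursion in the spirit of Theorem~\ref{le-10} to show that any extra structure beyond a "doubled" rotation $(1,2,\dots,t,1,2,\dots,t)$ of odd length $t$ forces at least two distinct non-constant exponents to appear; and (v) assembling the pieces and verifying $b=e(B)-k+k_2$. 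I would flag step (iv) explicitly as the crux.
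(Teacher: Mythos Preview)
Your high-level reduction is exactly the paper's: use multiplicativity under ribbon-join (the paper quotes this as Lemma~\ref{le-08} from \cite{GMT} rather than reproving it) to reduce to prime factors, handle the non-orientable and orientable prime cases separately, and then read off the exponent $b$. That skeleton is correct.

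Two substantive divergences, though. First, the non-orientable prime classification is \emph{not} ``previously established''; it is Theorem~\ref{le-07}, proved in this very section. The sentence you cite from the introduction is an announcement of what Section~6 will do, not a prior result. The paper's argument is a short boundary-component analysis via Lemma~\ref{le-05}: one shows (Claim~1) that the pattern $(e,f,-e,f)$ forces $\varepsilon(B)\neq\varepsilon(B^{\{e,f\}})$, then (Claim~2) reduces $(e,f,-e,-f)$ to Claim~1 by a single partial dual; since any prime non-orientable bouquet with $e(B)\geq 2$ contains one of these patterns, only $B_{\overline 1}$ survives. So step~(iii) still needs a proof, and a short one is available.

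Second, for the orientable prime case (your step~(iv), correctly flagged as the crux) the paper's method is quite different from your sketch, and your sketch has a structural obstacle. There is no minimal-counterexample induction and no recursion ``in the spirit of Theorem~\ref{le-10}'': that recursion requires a positive vertex of degree~$1$ in $SI(B)$, which a connected non-complete intersection graph need not have (and the hard cases are exactly the near-complete ones). Instead the paper argues directly. From $\varepsilon(B)=\varepsilon(B^{e})$ for every $e$, Lemma~\ref{le-05} forces each $e^{*}$ to be proper, so the two ribbon-sides of every edge lie on distinct boundary components. Comparing $\varepsilon(B)$ with $\varepsilon(B^{\{e,f\}})$ in the two sub-cases $B(\{e,f\})=(e,f,e,f)$ and $(e,e,f,f)$ then pins down how the pairs of boundary components carrying $e$ and $f$ must coincide or not. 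These constraints are mutually inconsistent with the pattern $(e,f,g,e,g,f)$ (Claim~3), i.e.\ with an induced $P_3$ in $I(B)$. Since a connected graph that is not complete always contains an induced $P_3$, the intersection graph must be complete, and Proposition~\ref{le-06} then forces odd order. If you want to keep your own approach, you will need a replacement for the degree-$1$ recursion that works when $I(B)$ has minimum degree $\geq 2$; otherwise, the boundary-component/Claim~3 route is the efficient one.
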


Note that the signed intersection graph of $B_{\overline{1}}$ is a negative isolated vertex and the signed intersection graph of $B_{2i+1}$ is a positive complete graph of order $2i+1$. In fact $B_{2i+1}$ is the only bouquet whose signed intersection graph is a positive complete graph of order $2i+1$. Restate Theorem \ref{main2} in the language of signed intersection graphs and intersection polynomial, we have

\begin{corollary}
Let $SG$ be a signed intersection graph. Then $IP_{SG}(z)=2^{v(SG)}z^{b}$ if and only if
each component of $SG$ is complete graph of odd order and each vertex, except isolated vertex, has positive sign.
\end{corollary}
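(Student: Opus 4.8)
The plan is to read this corollary off Theorem \ref{main2} by translating both sides of the equivalence into the language of bouquets. The one ingredient to set up first is the behaviour of signed intersection graphs under ribbon-join: if $B = P \vee Q$ then $SI(B) = SI(P) \sqcup SI(Q)$ (disjoint union of signed graphs), independently of which ribbon-join is taken, since every edge of $P$ has both half-edges on one arc of the glued vertex boundary and every edge of $Q$ on a complementary arc, so no edge of $P$ interlaces an edge of $Q$, while twistedness of loops survives the gluing unchanged. Together with the lemma that a bouquet is prime iff its intersection graph is connected, this shows that the connected components of $SI(B)$, with their signs, are exactly the signed intersection graphs of the prime factors of $B$. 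I also record the three relevant $SI$-computations: $SI(B_{\overline 1})$ is one negatively signed vertex, $SI(B_1)$ is one positively signed vertex, and for $i\ge 1$, $SI(B_{2i+1})$ is the positively signed complete graph $K_{2i+1}$; thus every member of $\mathcal{B}$ has as its signed intersection graph a complete graph of odd order all of whose non-isolated vertices are positive.

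For the forward direction, suppose $IP_{SG}(z) = 2^{v(SG)}z^{b}$ and choose any bouquet $B$ with $SI(B)=SG$, so that $v(SG)=e(B)$ and, by the definition of the intersection polynomial, ${^{\partial}}\varepsilon_{B}(z)=2^{e(B)}z^{b}$. Theorem \ref{main2} gives $B=B_{t_{1}}\vee\cdots\vee B_{t_{k}}$ with each $B_{t_{i}}\in\mathcal{B}$, hence $SG=SI(B_{t_{1}})\sqcup\cdots\sqcup SI(B_{t_{k}})$ by the previous paragraph, and each summand is a complete graph of odd order whose non-isolated vertices are positive. So $SG$ has the asserted form.

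For the converse, assume every component of $SG$ is a complete graph of odd order and every non-isolated vertex of $SG$ is positive. Assign to each component an element of $\mathcal{B}$ realizing it — a positive isolated vertex $\mapsto B_{1}$, a negative isolated vertex $\mapsto B_{\overline 1}$, a positive $K_{2i+1}$ with $i\ge 1 \mapsto B_{2i+1}$ — and let $B^{\ast}$ be the ribbon-join of these bouquets. Then $SI(B^{\ast})=SG$, so $IP_{SG}(z)={^{\partial}}\varepsilon_{B^{\ast}}(z)$ (well defined by Theorem \ref{main-1}), and Theorem \ref{main2} applied to $B^{\ast}=B_{t_{1}}\vee\cdots\vee B_{t_{k}}$ yields ${^{\partial}}\varepsilon_{B^{\ast}}(z)=2^{e(B^{\ast})}z^{b}=2^{v(SG)}z^{b}$; moreover the same theorem pins down $b=v(SG)-c+c'$, where $c$ is the number of components of $SG$ and $c'$ the number of negatively signed isolated vertices. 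Since Theorem \ref{main2} carries all the weight, the only delicate point is the ribbon-join/disjoint-union dictionary for $SI$ and its insensitivity to the non-uniqueness of the ribbon-join; the rest is routine bookkeeping over the three prime-factor types, so I anticipate no real obstacle.
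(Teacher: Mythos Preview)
Your proposal is correct and follows the same route the paper takes: the paper presents this corollary simply as a restatement of Theorem~\ref{main2} in the language of signed intersection graphs, noting only that $SI(B_{\overline{1}})$ is a negative isolated vertex and $SI(B_{2i+1})$ is a positive $K_{2i+1}$. You have spelled out in more detail the ribbon-join/disjoint-union dictionary and the bookkeeping for both directions, but the underlying argument is identical.
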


It is easy to see that $^{\partial}\varepsilon_{B_1}(z)=2$ and $^{\partial}\varepsilon_{B_{\overline{1}}}(z)=2z$.  To prove Theorem \ref{main2}, we shall use the following lemma.

\begin{lemma}\label{le-08}\cite{GMT}
Let $G=G_{1}\vee G_{2}$. Then
\begin{eqnarray}
^{\partial}\varepsilon_{G}(z)={^{\partial}}\varepsilon_{G_1}(z)~^{\partial}\varepsilon_{G_2}(z).
\end{eqnarray}
\end{lemma}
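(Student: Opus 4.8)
\textbf{Proof proposal for Lemma \ref{le-08}.}

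The plan is to prove the multiplicativity of the partial-dual Euler genus polynomial under ribbon-join directly from the summation definition, by establishing a bijection on edge subsets together with an additivity property of Euler genus across the join. First I would recall that $G = G_1 \vee G_2$ means $G_1$ and $G_2$ share exactly one vertex (the join vertex) and are otherwise disjoint, so that $E(G) = E(G_1) \sqcup E(G_2)$ as a disjoint union. The key structural fact I want to isolate is that partial duality respects this decomposition: for any $A \subseteq E(G)$, writing $A_1 = A \cap E(G_1)$ and $A_2 = A \cap E(G_2)$, the partial dual $G^A$ is itself a ribbon-join $G_1^{A_1} \vee G_2^{A_2}$. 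This holds because partial duality with respect to an edge is a local operation at that edge, and the join vertex acts as a common basepoint that is never destroyed by duals taken along edges living entirely inside one side.

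Granting that structural fact, I would then use additivity of Euler genus over ribbon-join. Since $G_1^{A_1}$ and $G_2^{A_2}$ meet in a single vertex, their union is a one-point wedge, and for a connected ribbon graph the Euler genus is additive over such a wedge: $\varepsilon(G_1^{A_1} \vee G_2^{A_2}) = \varepsilon(G_1^{A_1}) + \varepsilon(G_2^{A_2})$. (In the bouquet setting this follows from Lemma \ref{J2}, since the vertex, edge and boundary-component counts all split additively across the wedge after accounting for the shared vertex; more generally it is the standard additivity of genus under connected sum at a point.) Combining the two facts gives
\begin{equation*}
\varepsilon(G^A) = \varepsilon(G_1^{A_1}) + \varepsilon(G_2^{A_2})
\end{equation*}
for every $A \subseteq E(G)$.

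With this additive splitting in hand, the polynomial identity falls out of the factorization of the generating function. Because $A \mapsto (A_1, A_2)$ is a bijection between subsets of $E(G)$ and pairs in $2^{E(G_1)} \times 2^{E(G_2)}$, I would compute
\begin{align*}
{^{\partial}}\varepsilon_{G}(z) &= \sum_{A \subseteq E(G)} z^{\varepsilon(G^A)} = \sum_{A_1 \subseteq E(G_1)} \sum_{A_2 \subseteq E(G_2)} z^{\varepsilon(G_1^{A_1}) + \varepsilon(G_2^{A_2})} \\
&= \Bigl( \sum_{A_1 \subseteq E(G_1)} z^{\varepsilon(G_1^{A_1})} \Bigr) \Bigl( \sum_{A_2 \subseteq E(G_2)} z^{\varepsilon(G_2^{A_2})} \Bigr) = {^{\partial}}\varepsilon_{G_1}(z)\, {^{\partial}}\varepsilon_{G_2}(z),
\end{align*}
which is the claimed identity.

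The main obstacle, and the only place requiring genuine care, is justifying the structural claim that $G^A = G_1^{A_1} \vee G_2^{A_2}$, i.e. that partial duality commutes with the join decomposition. The cleanest route is to check it one edge at a time: for a single edge $e \in E(G_1)$ I would verify from the arrow-presentation (ribbon graph as a signed rotation / chord diagram) that taking the partial dual $G^{\{e\}}$ only rearranges the arcs lying on the $G_1$-side of the join vertex, leaving the $G_2$-side and the basepoint intact, so $G^{\{e\}} = G_1^{\{e\}} \vee G_2$. Iterating over the edges of $A$ (duals along disjoint edge sets on the two sides commute) then yields the full statement. Since this is precisely the locality of partial duality, I expect the argument to be routine but would want to state it carefully, as everything else follows formally.
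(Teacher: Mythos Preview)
The paper does not give its own proof of Lemma \ref{le-08}; it is quoted verbatim from \cite{GMT} and used as a black box. So there is nothing in the present paper to compare your argument against.

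That said, your proposal is correct and is exactly the standard argument one would expect in \cite{GMT}: split $A$ as $A_1\sqcup A_2$ along $E(G_1)\sqcup E(G_2)$, use locality of partial duality to get $G^A = G_1^{A_1}\vee G_2^{A_2}$, then additivity of Euler genus across a one-vertex join, and finally factor the double sum. Your identification of the only nontrivial step---that partial duality respects the join decomposition---is accurate, and your sketch via the arrow/arc description is the right way to check it. One small caveat: your parenthetical appeal to Lemma~\ref{J2} for additivity is a bit off, since after partial duals $G_1^{A_1}$ and $G_2^{A_2}$ need not be bouquets; you should instead invoke the general Euler-characteristic count $\varepsilon = 2c - v + e - f$ and check that $v$, $e$, $f$, $c$ split correctly across the wedge (which they do, with the shared vertex counted once).
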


It suffices to show that among all prime non-orientable bouquets there is only $B_{\overline{1}}$ whose partial-dual genus polynomial has one (non-constant) term and among all prime orientable bouquets there are only $B_{1}, B_3, B_5,\cdots$ whose partial-dual genus polynomials have one term.

Let $G^{*}$ denote the (full) dual  of a ribbon graph $G$. Corresponding to each edge $e$ of
$G$ there is an edge $e^{*}$ of $G^{*}$.
We view each ribbon as an oriented rectangle, then the opposing two sides lying on face-disks are called {\it ribbon-sides} \cite{GMT}. We need the following lemma.

\begin{lemma}\cite{GMT}\label{le-05}
Let $G$ be a ribbon graph and $e\in E(G)$. Then $\varepsilon(G)=\varepsilon(G^{e})$ if and only if
\begin{eqnarray*}
\left\{\begin{array}{ll}
                     e^{*}~\mbox{is proper in}~G^{*}, & \mbox{if}~e~\mbox{is an untwisted loop,}\\
                     e^{*}~\mbox{is an untwisted loop in}~G^{*}, & \mbox{if}~e~\mbox{is proper,}\\
                     e^{*}~\mbox{is a twisted loop in}~G^{*}, & \mbox{if}~e~\mbox{is a twisted loop.}
                   \end{array}\right.
\end{eqnarray*}
\end{lemma}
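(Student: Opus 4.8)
The plan is to compute $\varepsilon(G^{e})-\varepsilon(G)$ exactly and then read off when it vanishes. For any ribbon graph $H$ with $c(H)$ connected components one has $\varepsilon(H)=2c(H)-v(H)+e(H)-f(H)$, and partial duality changes neither $e(H)$ nor $c(H)$; hence
\[
\varepsilon(G^{e})-\varepsilon(G)=\bigl(v(G)-v(G^{e})\bigr)+\bigl(f(G)-f(G^{e})\bigr).
\]
I would then re-express $v(G^{e})$ and $f(G^{e})$ through single-edge deletion and contraction. Deletion never changes the vertex set, so by the defining equation $G/e=G^{e}-e$ we get $v(G^{e})=v(G^{e}-e)=v(G/e)$. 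Dually, contraction does not change the number of boundary components, and since $(G^{e})^{e}=G$ gives $G^{e}/e=G-e$, we get $f(G^{e})=f(G^{e}/e)=f(G-e)$. Therefore
\[
\varepsilon(G^{e})-\varepsilon(G)=\bigl(v(G)-v(G/e)\bigr)+\bigl(f(G)-f(G-e)\bigr).
\]

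The first term is a local quantity determined by the type of $e$ in $G$: contracting a proper edge merges its two end-disks, contracting an untwisted loop splits the incident vertex into two, and contracting a twisted loop leaves the vertex set unchanged; so $v(G)-v(G/e)$ equals $1$, $-1$, or $0$ according as $e$ is proper, an untwisted loop, or a twisted loop. For the second term I would pass to the dual: deletion of $e$ in $G$ corresponds to contraction of $e^{*}$ in $G^{*}$, i.e. $(G-e)^{*}=G^{*}/e^{*}$, and the dual exchanges vertices with boundary components, so $f(G)-f(G-e)=v(G^{*})-v(G^{*}/e^{*})$. Applying the same three local facts to the pair $(G^{*},e^{*})$, this equals $1$, $-1$, or $0$ according as $e^{*}$ is proper, an untwisted loop, or a twisted loop in $G^{*}$.

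Combining the two terms, $\varepsilon(G^{e})-\varepsilon(G)$ is a sum of one value in $\{1,-1,0\}$ coming from the type of $e$ in $G$ and one value in $\{1,-1,0\}$ coming from the type of $e^{*}$ in $G^{*}$; this sum is $0$ exactly when the two values are $\{1,-1\}$ or are both $0$, i.e. exactly in the three cases listed in the statement. The main obstacle I anticipate is supplying the two ``local'' inputs in a self-contained way: the effect of a single-edge contraction on the vertex count for each of the three edge types, and the identity $(G-e)^{*}=G^{*}/e^{*}$. Both are standard for ribbon graphs but, carried out from scratch, require inspecting a neighbourhood of the ribbon $e$; the only case-sensitive point is the loop case, where whether the incident vertex splits is decided by whether the loop is twisted --- which is precisely why, unlike in Lemma~\ref{le-04}, no ``consistent/inconsistent'' alternative survives in the final criterion.
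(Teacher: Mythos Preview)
The paper does not prove this lemma at all: it is quoted verbatim from \cite{GMT} and used as a black box, so there is no in-paper argument to compare against. Your approach is correct and essentially complete. The identity $\varepsilon(G^{e})-\varepsilon(G)=\bigl(v(G)-v(G/e)\bigr)+\bigl(f(G)-f(G-e)\bigr)$ follows exactly as you derive it, and the two ``local'' inputs you flag --- that $v(G)-v(G/e)$ equals $1,-1,0$ according as $e$ is proper, an untwisted loop, or a twisted loop, and the deletion--contraction duality $(G-e)^{*}=G^{*}/e^{*}$ --- are standard facts about ribbon graphs (the first is immediate from the description of $v(G^{e})$ as the number of boundary components of the ribbon subgraph on $e$ and its incident vertex/vertices; the second from commuting partial duality with deletion of an edge outside the dualizing set). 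Once those are in hand, the sum of the two $\{1,-1,0\}$ contributions vanishes exactly in the three displayed cases, as you say. There is no gap.
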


\subsection{Non-orientable case}
\noindent

\begin{theorem}\label{le-07}
Let $B$ be a prime non-orientable bouquet. Then ${^{\partial}}\varepsilon_{B}(z)=2^{e(B)}z^{b}$ if and only if $B=B_{\overline{1}}$.
\end{theorem}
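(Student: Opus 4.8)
The plan is to establish the two implications separately; the forward direction is a one‑line computation and the converse carries all the weight.

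For ($\Leftarrow$): $B_{\overline{1}}=(1,-1)$ has exactly the two partial duals $B_{\overline{1}}$ and $(B_{\overline{1}})^{e_{1}}$, both of which are single‑twisted‑loop bouquets (M\"obius bands, Euler genus $1$), so ${^{\partial}}\varepsilon_{B_{\overline{1}}}(z)=2z=2^{e(B_{\overline{1}})}z^{1}$, as already recorded above. For ($\Rightarrow$), assume $B$ is prime, non‑orientable, and ${^{\partial}}\varepsilon_{B}(z)=2^{e(B)}z^{b}$. Monomiality means $\varepsilon(B^{A})=b$ for every $A\subseteq E(B)$; since $B$ is non‑orientable (hence so is every partial dual of it) we have $b=\varepsilon(B)\geq1$, and by Lemma~\ref{le-01} this is equivalent to
\begin{equation*}
\varepsilon(B[A])+\varepsilon(B[A^{c}])=b\qquad\text{for all }A\subseteq E(B),\tag{$\star$}
\end{equation*}
where $B[A]$ is the induced sub‑bouquet. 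It suffices to show $(\star)$ forces $e(B)=1$, since $B_{\overline{1}}$ is the unique prime non‑orientable one‑edge bouquet; I argue by induction on $n=e(B)$, splitting the inductive step according to the structure of $SI(B)$.

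First suppose $SI(B)$ has a degree‑one vertex $v_{1}$, whose edge $e_{1}$ interlaces a unique edge $e_{2}$. If $v_{1}$ is positive, Theorem~\ref{le-10} gives $2^{n}z^{b}=IP_{SI(B)-v_{1}}(z)+(2z^{2})\,IP_{SI(B)-v_{1}-v_{2}}(z)$; both summands have non‑negative coefficients and value $2^{n-1}$ at $z=1$, so each must be a monomial concentrated in degree $b$, whence $IP_{SI(B)-v_{1}}(z)=2^{n-1}z^{b}$. Here $SI(B)-v_{1}=SI(B-e_{1})$, the bouquet $B-e_{1}$ is prime (deleting the leaf $v_{1}$ leaves $I(B)-v_{1}=I(B-e_{1})$ connected) and still non‑orientable (its $-$ vertex is not $v_{1}$), so by induction $B-e_{1}=B_{\overline{1}}$, forcing $n=2$; but the unique such $B$ is $(1,2,1,-2)$, for which a direct check gives ${^{\partial}}\varepsilon_{B}(z)=2z+2z^{2}$, a contradiction. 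If $v_{1}$ is negative, I first prove the companion recursion $IP_{SI(B)}(z)=z\,IP_{SI(B)-v_{1}}(z)+2z^{c}\,IP_{SI(B)-v_{1}-v_{2}}(z)$ with $c\in\{1,2\}$ depending on the sign of $v_{2}$, by exactly the partition of the subsets $A$ into the types $\tau_{1},\tau_{2}$ used for Theorem~\ref{le-10} (now a lone twisted loop contributes Euler genus $1$, and $\varepsilon((1,2,-1,\pm2))$ is computed directly); monomiality again makes $IP_{SI(B)-v_{1}}(z)$ a monomial and $B-e_{1}$ prime. If $B-e_{1}$ is non‑orientable, induction gives $n=2$ and $B=(1,2,-1,-2)$, which also has ${^{\partial}}\varepsilon_{B}(z)=2z+2z^{2}$. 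If $B-e_{1}$ is orientable (so $e_{1}$ is the only twisted loop of $B$), then $B-e_{1}$ is a prime orientable bouquet with monomial polynomial, hence $B-e_{1}=B_{2k+1}$ for some $k\geq0$ (the orientable half of the classification, run in parallel), so $B$ is $B_{2k+1}$ with a twisted loop attached interlacing one of its edges; feeding ${^{\partial}}\varepsilon_{B_{2k+1}}(z)=2^{2k+1}z^{2k}$ and ${^{\partial}}\varepsilon_{B_{2k}}(z)=2^{2k-1}z^{2k}+2^{2k-1}z^{2k-2}$ (Proposition~\ref{le-06}) into the recursion shows $IP_{SI(B)}(z)$ has more than one nonzero term, a contradiction.

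The remaining and hardest case is when $SI(B)$ has minimum degree at least $2$, so no vertex can be peeled off by such a recursion — this includes every $B$ with $SI(B)$ complete or a long chordless cycle. Here I would argue directly from $(\star)$, which forces $\varepsilon(B[A])\leq\varepsilon(B)$ for all $A$ and forces the deletion of a single edge to change $\varepsilon$ by exactly $0$ or $1$ according to the sign of that edge. Using primeness — connectivity of $I(B)$ — I would exhibit a subset $A$ (for instance $A$ the edge set of an induced cycle of $SI(B)$, or a two‑element set of non‑interlaced edges completed by a connecting edge) and compute $\varepsilon(B[A])+\varepsilon(B[A^{c}])$ from the boundary walks of the signed rotation, in the spirit of the proof of Lemma~\ref{le-04}, so as to contradict $(\star)$. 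The main obstacle is to carry this out uniformly: one must show that every prime non‑orientable bouquet with at least two edges contains an ``irreducibly entangled'' subset $A$ at which $\varepsilon(B[A])+\varepsilon(B[A^{c}])\neq\varepsilon(B)$, rather than checking $SI(B)$ family by family, and the interlacement/boundary‑component bookkeeping behind such a claim is the real technical heart, exactly as it was in Lemma~\ref{le-04}.
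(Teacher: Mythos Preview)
Your proposal has a genuine gap in the final case. When $SI(B)$ has minimum degree at least $2$ you offer only a plan (``I would argue directly from $(\star)$\ldots'') and explicitly concede that ``the main obstacle is to carry this out uniformly''. But this case is exactly where the content lies: every prime non‑orientable bouquet whose intersection graph is, say, a complete graph or a long cycle falls here, and none of your recursions apply. Without a concrete argument for this case the proof is incomplete. A secondary issue is that your degree‑one case, when $v_1$ is negative and $B-e_1$ is orientable, invokes Theorem~\ref{le-09} (the orientable classification), so your argument for the non‑orientable theorem leans on the orientable one; this is not circular, but it is a dependency the paper does not need.

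The paper's proof is much shorter and sidesteps all of this. Assuming $e(B)\geq 2$, primeness plus non‑orientability guarantee a twisted loop $e$ interlaced with some loop $f$. Monomiality forces $\varepsilon(B^{e})=\varepsilon(B^{f})=\varepsilon(B)$, and Lemma~\ref{le-05} then determines the types of $e^{*}$ and $f^{*}$ in $B^{*}$: when $f$ is untwisted, $e^{*}$ is a twisted loop and $f^{*}$ is proper. This tells you exactly which boundary components of $B$ the ribbon‑sides of $e$ and $f$ occupy, and a direct count gives $f(B-\{e,f\})=f(B)-1$, hence $\varepsilon(B^{\{e,f\}})=\varepsilon(\{e,f\})+\varepsilon(B-\{e,f\})=\varepsilon(B)+1$, contradicting monomiality. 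The case where $f$ is also twisted is reduced to the previous one by replacing $B$ with $B^{e}$. There is no induction, no recursion on $SI(B)$, and no split according to the minimum degree of $SI(B)$; the entire argument takes place at a single pair of interlaced edges. Your inductive strategy could in principle be completed, but it is working much harder than necessary, and the missing min‑degree~$\geq 2$ case would essentially require rediscovering this local two‑edge computation anyway.
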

\begin{proof}
The sufficiency is easily verified by calculation. For necessity,  since $B$ is non-orientable, we may assume that
$e(B)\geqslant 2$.

{\bf Claim 1. } $B$ does not contain a bouquet with signed rotation $(e, f, -e, f)$.

Suppose that Claim 1 is not true. Then $e^{*}$ is a twisted loop and $f^{*}$ is proper in $B^{*}$ by Lemma \ref{le-05}. Thus the two ribbon-sides of $e$ lie on a same boundary component of $B$, denoted by $C_{1}$, and if we assign two arrows to the two ribbon-sides of $e$ such that these two arrows are consistent on the edge boundary of $e$, then these two arrows are non-consistent on $C_{1}$  and the two ribbon-sides of $f$ lie on different boundary components of $B$ as in Figure \ref{f0010}.
\begin{figure}[!htbp]
\begin{center}
\includegraphics[width=15cm]{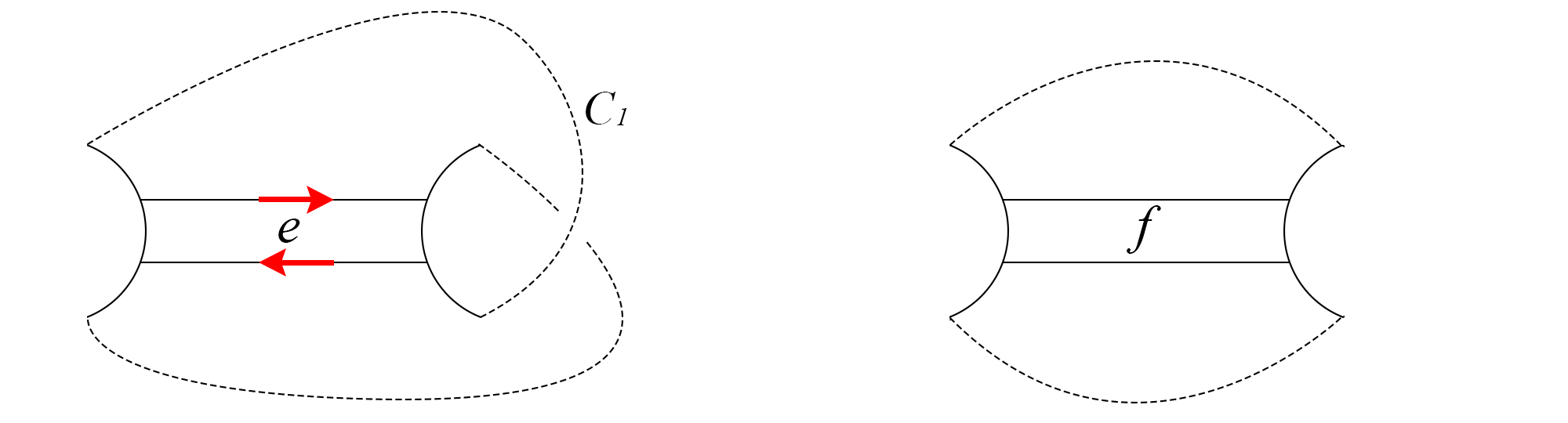}
\caption{Proof of Theorem \ref{le-07}.}
\label{f0010}
\end{center}
\end{figure}
Delete the edge $e$ and note that $f(B)=f(B-e)$ and the two ribbon-sides of $f$ also lie on different boundary components of $B-e$. Hence, $f(B-\{e, f\})=f(B-e)-1$, that is,
$f(B-\{e, f\})=f(B)-1$. Since $\varepsilon(e, f, -e, f)=2$ and
$\varepsilon(B-\{e, f\})=e(B)-1-f(B-\{e, f\})$ by Euler formula,  we have
$$\varepsilon(B^{\{e, f\}})=\varepsilon(e, f, -e, f)+\varepsilon(B-\{e, f\})=e(B)+1-f(B-\{e, f\})$$
by Lemma \ref{le-01}.
Since $\varepsilon(B)=e(B)+1-f(B)$, it is easy to check that  $\varepsilon(B)\neq \varepsilon(B^{\{e, f\}})$,
contrary to ${^{\partial}}\varepsilon_{B}(z)=2^{e(B)}z^{b}$.
The claim then follows.

{\bf Claim 2. } $B$ does not contain a bouquet with signed rotation $(e, f, -e, -f)$.

Assume that Claim 2 is not true. It is easily seen that $B^{e}$ contains a bouquet with signed rotation $(e, f, -e, f)$.
Since ${^{\partial}}\varepsilon_{B^{e}}(z)={^{\partial}}\varepsilon_{B}(z)=2^{e(B)}z^{b}$,  this contradicts Claim 1.

Since $B$ is a non-orientable bouquet, there exists a twisted loop. Let $e$ be any twisted loop.  As $B$ is prime and $e(B)\geqslant 2$, there exists a loop $f$ such that the loops $e$ and $f$ alternate, this contradicts Claim 1 or 2.
Hence $e(B)=1$, that is, $B=B_{\overline{1}}$.
\end{proof}

\subsection{Orientable case}
\noindent

\begin{theorem}\label{le-09}
Let $B$ be a non-empty prime orientable bouquet. Then ${^{\partial}}\varepsilon_{B}(z)=2^{e(B)}z^{b}$ if and only if
$B=B_{2i+1}$.
\end{theorem}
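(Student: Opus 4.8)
The plan is to dispose of the backward implication by a direct computation and to prove the forward implication by induction on $e(B)$, reducing it to a statement about the intersection graph $I(B)$. For $B=B_{2i+1}$, Proposition~\ref{le-06} with $t=2i+1$ odd gives ${^{\partial}}\varepsilon_{B_{2i+1}}(z)=2^{2i+1}z^{2i}=2^{e(B)}z^{2i}$, a monomial. For the forward implication, let $B$ be a non-empty prime orientable bouquet with ${^{\partial}}\varepsilon_{B}(z)=2^{e(B)}z^{b}$; if $e(B)=1$ then $B=B_{1}=B_{2\cdot 0+1}$, so assume $e(B)\geq 2$ and induct on $e(B)$. As $B$ is prime, $I(B)$ is connected with at least two vertices, and as $B$ is orientable every vertex of $SI(B)$ is positive. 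By Lemma~\ref{le-01} we have $\varepsilon(B^{A})=\varepsilon(B[A])+\varepsilon(B[A^{c}])$ for all $A\subseteq E(B)$, and by Lemma~\ref{le-04} each $\varepsilon(B[A])$ depends only on $I(B)[A]$; so the hypothesis says exactly that $\varepsilon(B[A])+\varepsilon(B[A^{c}])$ is independent of $A$. The goal is to deduce that $I(B)$ is a complete graph of odd order, whence $B=B_{2i+1}$ because $B_{2i+1}$ is the only bouquet whose signed intersection graph is the positive complete graph on $2i+1$ vertices (as recalled before the theorem).

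The first step is to exclude a vertex $v_{1}$ of degree $1$ in $I(B)$. If $v_{1}$ is adjacent to $v_{2}$, then (as $v_{1}$ is positive) Theorem~\ref{le-10} gives $IP_{SI(B)}(z)=IP_{SI(B)-v_{1}}(z)+2z^{2}\,IP_{SI(B)-v_{1}-v_{2}}(z)$. All three polynomials have nonnegative coefficients and the left-hand side is $2^{e(B)}z^{b}$, so both summands on the right are monomials, and evaluating at $z=1$ forces $IP_{SI(B)-v_{1}}(z)=2^{e(B)-1}z^{b}$. Hence $B-e_{1}$ (with $e_{1}$ the edge corresponding to $v_{1}$) is prime (deleting a leaf keeps $I(B)$ connected), orientable, has fewer edges, and has a monomial polynomial, so by induction $B-e_{1}=B_{2j+1}$ for some $j\geq 0$. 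Then $SI(B)-v_{1}-v_{2}$ is the positive complete graph on $2j$ vertices, so $IP_{SI(B)-v_{1}-v_{2}}(z)={^{\partial}}\varepsilon_{B_{2j}}(z)$, which by Proposition~\ref{le-06} equals $2^{2j-1}(z^{2j}+z^{2j-2})$ if $j\geq 1$ and equals $1$ if $j=0$; in the first case $2z^{2}IP_{SI(B)-v_{1}-v_{2}}(z)$ has two terms, in the second $IP_{SI(B)}(z)=2+2z^{2}$ has two terms, either way contradicting the hypothesis. So from now on $I(B)$ has minimum degree at least $2$.

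It remains to rule out the case in which $I(B)$ is connected, of minimum degree $\geq 2$, and not complete; this is the heart of the argument and the step I expect to be the main obstacle, since here there is no single forbidden subconfiguration, and one must instead show that every such graph admits a subset $A$ with $\varepsilon(B[A])+\varepsilon(B[A^{c}])$ off the required constant. If $I(B)$ is not complete it contains an induced path on three vertices $a,b,c$ with edges $ab,bc$ but $ac\notin E(I(B))$. Using $\varepsilon(B^{A})=\varepsilon(B[A])+\varepsilon(B[A^{c}])$ together with the small genera $\varepsilon(B[\{e\}])=0$, $\varepsilon(B[\{e,f\}])\in\{0,2\}$ (according as $e,f$ are non-interlaced or interlaced), and $\varepsilon(B[\{a,b,c\}])=2$, I would compare the partial duals of $B$ over $\emptyset,\{a,c\},\{a,b\},\{b,c\},\{b\},\{a,b,c\}$ and related sets; writing $R=E(B)\setminus\{a,b,c\}$, the values $\varepsilon(B[R\cup S])$ for $S\subseteq\{a,b,c\}$ are determined by how the chords $a,b,c$ meet the boundary components of $B[R]$, which one tracks by the ribbon-side and orientation-consistency bookkeeping used in the proofs of Lemma~\ref{le-04} and Theorem~\ref{le-07} (together with Lemma~\ref{le-05}), crucially using that $B$ is prime so the chords of $R$ genuinely tie $a,b,c$ together. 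The intended conclusion is that no constant $b$ satisfies all the resulting equations, forcing $I(B)$ to be complete, say $I(B)=K_{n}$.

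Finally $n$ must be odd: otherwise $IP_{K_{n}}(z)={^{\partial}}\varepsilon_{B_{n}}(z)=2^{n-1}(z^{n}+z^{n-2})$ by Proposition~\ref{le-06} has two terms. Hence $I(B)=K_{2i+1}$, and by uniqueness of the bouquet realizing the positive $K_{2i+1}$ we conclude $B=B_{2i+1}$, completing the induction.
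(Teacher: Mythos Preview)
Your proposal has a genuine gap: the ``heart of the argument'' in Step~2 is not actually carried out. You correctly identify that, since $I(B)$ is connected and not complete, it contains an induced path $a\text{--}b\text{--}c$, and you announce that comparing several partial duals over subsets of $\{a,b,c\}$ should produce a contradiction. But you then write ``I would compare\dots'' and ``The intended conclusion is\dots'' without performing the comparison; no equations are written down and no contradiction is derived. As stated, Step~2 is a plan, not a proof. (Incidentally, Step~1 is correct but redundant: the induced-$P_3$ argument you aim for in Step~2 works for any connected non-complete $I(B)$, regardless of minimum degree, so the separate elimination of leaves is unnecessary.)

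The paper fills exactly this gap, and its mechanism is different from the raw genus-equation bookkeeping you sketch. From the monomial hypothesis one has $\varepsilon(B^{e})=\varepsilon(B)$ for every $e$, so by Lemma~\ref{le-05} each $e^{*}$ is proper in $B^{*}$; equivalently, the two ribbon-sides of every edge $e$ lie on two \emph{distinct} boundary components $\{C_{e_{1}},C_{e_{2}}\}$ of $B$. Now using $\varepsilon(B^{\{e,f\}})=\varepsilon(B)$ together with $f(B-e)=f(B)-1$, one shows: if $e,f$ are interlaced then $\{C_{e_{1}},C_{e_{2}}\}=\{C_{f_{1}},C_{f_{2}}\}$, while if $e,f$ are not interlaced then $\{C_{e_{1}},C_{e_{2}}\}\neq\{C_{f_{1}},C_{f_{2}}\}$. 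Applied to an induced $P_{3}$ on $a,b,c$ with $b$ the middle vertex, the first conclusion forces $\{C_{a_{1}},C_{a_{2}}\}=\{C_{b_{1}},C_{b_{2}}\}=\{C_{c_{1}},C_{c_{2}}\}$, while the second (for the non-adjacent pair $a,c$) forbids $\{C_{a_{1}},C_{a_{2}}\}=\{C_{c_{1}},C_{c_{2}}\}$ --- a contradiction. This is precisely the missing ingredient that converts your Step~2 outline into an argument; with it in hand, no induction and no preliminary leaf-removal are needed.
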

\begin{proof}
The sufficiency is easily verified by Proposition \ref{le-06}. For necessity, the result is easily verified when $e(B)\in \{1, 2\}$.
Assume that $e(B)\geqslant 3$. Let $e, f, g\in E(B)$. Note that $e^{*}, f^{*}$ and $g^{*}$ are proper in $B^{*}$ by  Lemma \ref{le-05}.
Hence the two ribbon-sides of $e$ (or $f$ or $g$) lie on different boundary components of $B$. We denote the two ribbon-sides of $e$ (or $f$ or $g$) lying on the two boundary components of $B$ by $C_{e_{1}}$ and $C_{e_{2}}$ (or $C_{f_{1}}$ and $C_{f_{2}}$ or $C_{g_{1}}$ and $C_{g_{2}}$), respectively.

The following facts about ribbon graphs are well known and readily seen to be true. Deleting any edge $e$ of an orientable ribbon graph $G$ can change the number of boundary components exactly one. Otherwise, $G^{*}$ contains a twisted loop, which is contrary to the orientability of  $G$. More specifically,

  ({\bf T1}) The two ribbon-sides of $e$ lie on different boundary components of $G$ if and only if $f(G-e)=f(G)-1$.

  ({\bf T2}) The two ribbon-sides of $e$ lie on the same boundary component of $G$ if and only if $f(G-e)=f(G)+1$.

From (T1) it follows that $f(B-e)=f(B)-1.$ Obviously, $\varepsilon(B)=e(B)+1-f(B)$ and $\varepsilon(B-\{e, f\})=e(B)-1-f(B-\{e, f\})$ by Euler formula. There are two cases to consider:

{\bf Case 1. } If $B(\{e, f\})=(e, f, e, f)$, we have
$$\varepsilon(B^{\{e, f\}})=\varepsilon(e, f, e, f)+\varepsilon(B-\{e, f\})=e(B)+1-f(B-\{e, f\})$$
by Lemma \ref{le-01}.
Since $\varepsilon(B^{\{e, f\}})=\varepsilon(B)$, it follows that $$f(B-\{e, f\})=f(B)=f(B-e)+1.$$
Then the two ribbon-sides of $f$ must lie on the same boundary component of $B-e$ from (T2). Hence, the two ribbon-sides of $f$ must lie on $C_{e_{1}}$ and $C_{e_{2}}$, respectively, in $B$ . Thus $\{C_{e_{1}}, C_{e_{2}}\}=\{C_{f_{1}}, C_{f_{2}}\}$.

{\bf Case 2. } If $B(\{e, f\})=(e, e, f, f)$, then
$$\varepsilon(B^{\{e, f\}})=\varepsilon(e, e, f, f)+\varepsilon(B-\{e, f\})=e(B)-1-f(B-\{e, f\})$$
by Lemma \ref{le-01}.
As $\varepsilon(B^{\{e, f\}})=\varepsilon(B)$, we have $$f(B-\{e, f\})=f(B)-2=f(B-e)-1.$$ Then the two ribbon-sides of $f$ lie on different boundary components of $B-e$ from (T1). Hence at most one of the two ribbon-sides of $f$ lie on $C_{e_{1}}$ and $C_{e_{2}}$ in $B$. Thus $\{C_{e_{1}}, C_{e_{2}}\}\cap\{C_{f_{1}}, C_{f_{2}}\}\neq \{C_{e_{1}}, C_{e_{2}}\}$.

{\bf Claim 3. } $B$ does not contain a bouquet with signed rotation $(e, f, g, e, g, f)$.

Assume that Claim 3 is not true. Since $B(\{e, f\})=(e, f, e, f)$ and $B(\{e, g\})=(e, g, e, g)$,
it follows that $$\{C_{e_{1}}, C_{e_{2}}\}=\{C_{f_{1}}, C_{f_{2}}\}=\{C_{g_{1}}, C_{g_{2}}\}$$ by Case 1.
Thus $$\{C_{f_{1}}, C_{f_{2}}\}\cap\{C_{g_{1}}, C_{g_{2}}\}=\{C_{f_{1}}, C_{f_{2}}\}.$$ But $B(\{f, g\})=(f, f, g, g)$, this contradicts Case 2.

Suppose that $I(B)$ is not a complete graph.  Note that $I(B)$ is connected. Then there is a vertex set $\{v_{e}, v_{f}, v_{g}\}$ of $I(B)$ such that the induced subgraph $I(B)(\{v_{e}, v_{f}, v_{g}\})$ is a 2-path (see Exercise 2.2.11 \cite{BMU}).
We may assume without loss of generality that the degree of $v_{e}$ is 2 in $I(B)(\{v_{e}, v_{f}, v_{g}\})$ and $v_{e}, v_{f}, v_{g}$ are corresponding to  the loops $e, f, g$ of $B$, respectively. Thus $B(\{e, f, g\})=(e, f, g, e, g, f)$, this contradicts Claim 3. Hence $I(B)$ is a complete graph and $B=B_{2i+1}$ by Proposition \ref{le-06}.
\end{proof}

Theorem \ref{le-09} tells us that Conjecture \ref{con-01} is actually true for all prime orientable bouquets except the family of counterexamples as in Proposition \ref{le-06}.

\section{Concluding remarks}
\noindent

As shown in Remark \ref{rem1}, there are different signed intersection graphs with the same intersection polynomial. More examples could be obtained by using Theorem \ref{main2}. For example, let $K_5^+$ be the positive $K_5$ and $4K_1^-\cup 1K_1^+$ be the disjoint union of 4 negative isolated vertices and 1 positive isolated vertex, then $IP_{K_5^+}(z)=IP_{4K_1^-\cup 1K_1^+}(z)=32z^4$. Similar to the chromatic polynomial \cite{Dong} and the Tutte polynomial \cite{GM}, we could call two signed intersection graphs \emph{IP-equivalent} if they have the same intersection polynomial. It is interesting to find more examples of equivalent signed intersection graphs and eventually clarify the IP-equivalence from the viewpoint of the structures of graphs.

Not every signed graph is a signed intersection graph. We define the intersection polynomial of a signed intersection graph $SG$ to be the partial-dual Euler genus polynomial of the bouquet $B$ with $SG=SI(B)$. Could we redefine the intersection polynomial for signed intersection graphs independent of the bouquets? The recursion in Theorem \ref{le-10} is a try, but fail even for the negative $v_1$. If the answer is negative, could we define a polynomial on a more larger set of signed graphs including all signed intersection graphs such that when we restrict ourself to a signed intersection graph it is exactly the intersection polynomial?

We have characterized non-empty bouquets whose partial-dual genus polynomials have only one term. One can continue to try to characterize bouquets whose partial-dual genus polynomials have exactly two terms.

As we mentioned a little in the introduction, except the partial-dual (i.e. partial-$*$) Euler genus polynomial, there are partial-$\times$, partial-$*\times$, partial-$\times *$ and partial-$*\times*$ Euler genus polynomials \cite{GMT2}. For investigation of the partial-$\bullet$ Euler genus polynomial, one can focus on bouquets if $\bullet\in \{*\times, \times *, *\times *\}$ and focus on quasi-trees (i.e. ribbon graphs with only one face) if $\bullet=\times$. Could we derive something from bouquets or quasi-trees which could determine the partial-$\bullet$ Euler genus polynomial completely?

Finally we point that we find Theorem \ref{le-09} is also obtained independently by Chumutov and Vignes-Tourneret in \cite{chum}, an arXiv paper appeared about one week ago, but the proof is not completely the same.

\section*{Acknowledgements}
\noindent

This work is supported by NSFC (No. 11671336) and the Fundamental Research
Funds for the Central Universities  (No. 20720190062).

%\section*{References}
%\noindent

\bibliographystyle{model1b-num-names}
\bibliography{<your-bib-database>}
%\bibliographystyle{amcjoucc}
%\bibliography{amcexample}
%\section*{References}
\noindent

\end{document}